\documentclass[a4paper,leqno,11pt]{amsart}

\usepackage{amssymb}
\usepackage{amsthm}
\usepackage{tikz-cd}
\usepackage{mathtools}
\usepackage{framed}
\usepackage[mathscr]{eucal}
\usepackage{fullpage}
\usepackage{hyperref}
\usepackage{enumerate}
\usepackage{wasysym}
\usepackage{quiver}
\usepackage{xcolor}
\usepackage{scalerel}
\usepackage{comment}
\usepackage{faktor}
\usepackage[all]{xy}
\hypersetup{colorlinks=true,citecolor=blue,urlcolor =black,linkbordercolor={1 0 0}}
\mathtoolsset{centercolon}
\title{Infinitesimal invariants of genus \(4\) curves and the Griffiths-Pirola cycle}

\author{}
\date{\today}

\def\Z{{\mathbb Z}}

\def\C{{\mathbb C}}

\def\Q{{\mathbb Q}}
\def\proj{{\mathbb P}}

\def\cC{\mathcal{C}}

\def\cH{\mathcal{H}}
\def\cI{\mathcal{I}}

\def\cM{\mathcal{M}}

\def\cO{\mathcal{O}}

\def\cQ{\mathcal{Q}}

\def\-{\textup{-}}

\def\llra{\hbox to 10mm{\rightarrowfill}}
\def\lllra{\hbox to 15mm{\rightarrowfill}}

\def\llla{\hbox to 10mm{\leftarrowfill}}
\def\lllla{\hbox to 15mm{\leftarrowfill}}

\def\thra{\twoheadrightarrow}

\DeclareMathOperator{\Aut}{Aut}

\DeclareMathOperator{\CH}{CH}

\DeclareMathOperator{\Gal}{Gal}

\DeclareMathOperator{\Gr}{\mathsf{Gr}}

\DeclareMathOperator{\Hom}{Hom}

\DeclareMathOperator{\id}{Id}

\def\im{\mathop{\rm Im}\nolimits}

\DeclareMathOperator{\Ker}{Ker}

\DeclareMathOperator{\Kum}{Kum}

\DeclareMathOperator{\PGL}{PGL}
\DeclareMathOperator{\Pic}{Pic}

\DeclareMathOperator{\pr}{\mathsf{pr}}

\DeclareMathOperator{\Spec}{Spec}

\DeclareMathOperator{\Sym}{Sym}

\def\llra{\hbox to 10mm{\rightarrowfill}}
\def\lllra{\hbox to 15mm{\rightarrowfill}}

\newtheorem{theorem}{Theorem}[section]
\newtheorem*{theorem*}{Theorem}
\newtheorem{lemma}[theorem]{Lemma}
\newtheorem{proposition}[theorem]{Proposition}

\newtheorem{corollary}[theorem]{Corollary}
\newtheorem*{corollary*}{Corollary}

\newtheorem*{claim*}{Claim}

\theoremstyle{definition}

\theoremstyle{remark}
\newtheorem{remark}[theorem]{Remark}
\newtheorem*{remark*}{Remark}
\newtheorem*{note*}{Note}

\def\sss[#1]{{S^{[#1]}}}

\def\setminus{\smallsetminus}

\usepackage[
  backend=biber,
  style=alphabetic,
  sorting=nty,
  maxcitenames=50,
  maxnames=50
]{biblatex}

\renewbibmacro{in:}{}
\AtEveryBibitem{%
  \clearfield{issn} % Remove issn
  \clearfield{doi} % Remove doi
  \clearfield{isbn}
  \ifentrytype{online}{}{% Remove url except for @online
    \clearfield{url}
  }
}

\author{Matteo Verni}
\address{Sorbonne Université, Université Paris Cité, CNRS, IMJ-PRG, F-75005 Paris, France}
 \email{{\tt matteo.verni@imj-prg.fr}}

\begin{document}

\begin{abstract}
We give an explicit algebraic description for the infinitesimal invariants associated to the Griffiths-Pirola cycle on the universal genus \(4\) curve and its self intersection. We apply this to refine a result of Griffiths which recovers from these invariants the equations for the general genus 4 curve, and give a new proof (only in genus 4) of a nonvanishing result of Green and Griffiths.
\end{abstract}

\maketitle

%\setcounter{tocdepth}{1}
%\tableofcontents

\section{Introduction}
Griffiths' infinitesimal invariants were first introduced by Griffiths et. al. in the foundational series of papers \cite{GriffithsI}\cite{GriffithsII}\cite{GriffithsIII}. 
They were defined, for any smooth projective family \(f\colon X\to B\), by differentiating \textit{normal functions}, i.e. ``horizontal" holomorphic sections of the intermediated Jacobian \(J^{2k-1}_{X/B}\to B\) associated to the odd weight \(2k-1\) variation of polarized Hodge structures (VHS for short) on the local system \(R^{2k-1}f_* \, \underline{\Z}_X\) (\cite[Section 7.1.2]{Voisin_book_I}). 
Griffiths' theory was applied with great success to the study of algebraic cycles. 
Indeed, any codimension \(k\) cycle \(Z\in \CH^k(X)\) such that \(Z_{|X_b}\in \CH^k(X_b)_{hom}\) for all \(b\in B\) defines a normal function \[\nu_Z\colon B\to J^{2k-1}_{X/B};\]
by studying the derivative \(d \nu_Z\) obtains what is called the \textit{first infinitesimal invariant \(\delta \nu_Z\)}, which represents the first obstruction to \(Z_b \in \CH^k(X_b)_\Q\) being zero for general \(b\in B\). The original differential approach of Griffiths has continued to prove fruitful up until nowadays, as is demonstrated for example through the work of Pirola, Collino, Colombo, Frediani and others (\cite{Collino_Pirola_Duke_95}, \cite{Col_Fre_Pir_2025_Crelle}, \dots).

%Apart from the beautiful results they contain, these articles were influential as they brought to light the algebraic nature of Griffiths' invariant. 
A crucial ingredient for further developments in the study of cycle classes in families, such as \cite{Nori93} and \cite{Voisin94}, was the discovery (\cite{Voisin1988_Remarque}) that Griffith's invariant, which is constructed from differential geometry, also admits a simple cohomological interpretation. Indeed, Voisin shows that, when \(k=1\) (or more generally when the cohomology of the fibers has coniveau \(1\)),
the value of \(\nu_Z\) at the general point \(b\in B\) identifies with the restriction of the Dolbeault cohomology class \([Z]\in H^k(\Omega_X^k)\) inside \(H^k(\Omega_{X|X_b}^k)\) (see \cite[Theorem 7.14]{Voisin_book_II} for details).
This approach lends itself very well to defining invariants for higher codimensional cycles such as the one needed in \cite{Voisin94} (see \cite[page 142]{Voisin_book_II}).

%Even though the original differential approach of Griffiths has continued to prove fruitful up until nowadays, as is demonstrated for example through the work of Pirola, Collino, Colombo and others (\cite{Collino_Pirola_Duke_95}, \cite{Col_Fre_Pir_2025_Crelle}, \dots), 
%the latter approach to infinitesimal invariants based on homological algebra is sometimes advantageous. For example, it lends itself very well to defining \textit{higher order invariants}, such as the one needed in \cite{Voisin94} (see \cite[page 142]{Voisin_book_II}).
%, and the appendix of \cite{GreenGriffiths_0-cycle} for a brief discussion of their role).

\subsection{The Griffiths-Pirola normal function \texorpdfstring{\(\gamma_P\)}{ciao}}
In this article we adopt the algebraic point of view, in order to study a particularly interesting normal function \(\gamma_P\) defined away from the hyperelliptic locus, on a double cover of the moduli space of genus \(4\) curves \(\cM_4\). Its construction, which we will now illustrate, goes back to \cite{GriffithsIII}.

Let \(C\) be a non-hyperelliptic proper smooth curve of genus \(4\) over \(\C\). The canonical embedding \(C\hookrightarrow \proj(H^0(K_C))\simeq \proj^3\) is the complete intersection of a quadric \(Q_C\) and a cubic in \(\proj^3\) (see \cite[page 118]{ACGH}). 

Let \(U_0\subset \cM_4\) be the open substack parametrizing those curves \(C\) such that \(Q_C\) is smooth: for \([C]\in U_0\), \(Q_C\) is projectively equivalent to the Segre quadric surface \(\proj^1 \times \proj^1 \hookrightarrow \proj^3\).
We denote by \(L_1 ,L_2\) the two line bundles on \(C\) given by restricting the two rulings \(\cO_{\proj^1 \times \proj^1}(1,0)\) and \(\cO_{\proj^1 \times \proj^1}(0,1)\) to \(C\). We then have that \(L_1+ L_2=\cO_{\proj^3}(1)_{|C}\) and
%By setting \(L_i\coloneqq (L_i ')_{|C}\), 
\[\deg L_1 = \deg L_2=3.\]
%\deg c_1(L_i')\cap c_1(\cO_{Q_C}(3))=
%\deg c_1(\cO_{\proj^1 \times \proj^1}(1,0)\cap c_1(\cO_{\proj^1\times \proj^1} (3,3)) =
If we denote by \(J(C)\) the Jacobian of \(C\), we can define a special point of \(J(C)\) by setting \[L\coloneqq L_1- L_2\in J(C).\] 
This point is only well-defined up to a choice labelling of the two rulings: however, the image of \(L\) in the quotient \(J(C)/\pm 1\) does not depend on any choice. For this reason, if \(\cC \to \cM_4\) denotes the universal curve of genus \(4\) and \(J \to \cM_4\) the associated Jacobian fibration, then the above construction produces a section 
% https://q.uiver.app/#q=WzAsMixbMCwwLCJcXEt1bShKX1xcY0MpIl0sWzEsMCwiXFxjTV80Il0sWzAsMV0sWzEsMCwiXFxudV9QIiwyLHsiY3VydmUiOjN9XV0=
\[\begin{tikzcd}
	{\Kum(J)} & {\cM_4}
	\arrow[from=1-1, to=1-2]
	\arrow["{\gamma_P}"', curve={height=18pt}, from=1-2, to=1-1]
\end{tikzcd}\]
which vanishes precisely on \(\cM_4\setminus U_0\). Here, for any abelian scheme \(A\to B\), we denote by \(\Kum (A) \to S\) the quotient \(A/\pm 1 \to B\), whose fiber above \(b\in B\) is the \textit{Kummer variety} \[\Kum(A_b)\coloneqq A_b/\pm 1.\]
Up to a double cover \(\widetilde{\cM_4} \to \cM_4\) branched along \(\cM_4\setminus U_0\), we can resolve the ambiguity in the choice of ruling: if \(\widetilde{J}\coloneqq J\times_{\cM_4} \widetilde{\cM_4}\), we obtain a section of the base change
% https://q.uiver.app/#q=WzAsMixbMCwwLCJKX1xcY0MiXSxbMSwwLCJcXHdpZGV0aWxkZXtcXGNNXzR9Il0sWzAsMV0sWzEsMCwiXFxudV9QIiwyLHsibGFiZWxfcG9zaXRpb24iOjQwLCJjdXJ2ZSI6Mn1dXQ==
\[\begin{tikzcd}
	{\widetilde{J}} & {\widetilde{\cM_4}}
	\arrow[from=1-1, to=1-2]
	\arrow["{\gamma_P}"'{pos=0.4}, curve={height=12pt}, from=1-2, to=1-1]
\end{tikzcd}\]
 which we also denote by \(\gamma_P\). We call this the \textit{Griffiths-Pirola normal function}.

%As we implicitely used already just above, the actions of \(\langle \pm 1\rangle \) and \( G_K\) coincide on \([L]+[-L]\).

Let \(U\subset  \cM_4\) denote any open subscheme parametrizing curves \(C\subset \proj^3\) lying on smooth quadrics and let \(\widetilde{U}\coloneqq U\times_{\cM_4} \widetilde{\cM_4}\).
Denote by \(\cC\to U\) and \(J\to U\) the pullbacks along \(U \to \cM_4\) of the universal genus \(4\) curve and its relative Jacobian, respectively. Set \(\widetilde{\cC}\coloneqq \cC \times_U \widetilde{U}\) and \(\widetilde{J}\coloneqq J \times_U \widetilde{U}\).

Recall that for any \(m\in U\) we have identifications \begin{equation}\label{eq_compar_divisors}
    \Pic^0(\cC_m)=\Pic^0(\cC_m^{(2)})=Pic^0(J_m)
\end{equation}
induced by natural maps at the level of the relative Picard schemes for \(\cC/U\), \(\cC^{(2)/U}/U\) and \(J/U\), where the second family is defined to be the relative symmetric product of \( \cC \to U\).
The Griffiths-Pirola normal function \(\gamma_P \colon \widetilde{U} \to \widetilde{J}\) then corresponds to a divisor 
\begin{equation}\label{eq_def_D_Z}
    D_{\gamma_P} \in \CH^1(\widetilde{\cC}^{(2)/\widetilde{U}}) 
    %    \ \ \textup{ and } \ \  Z_{\gamma_P} \in \CH^1(\widetilde{J})
\end{equation}
whose fiber \(D_{\gamma_P} (m)\in \CH^1(\cC_m^{(2)})\)
%and \(Z_{\gamma_P} (m)\in \CH^1(J_m)\) 
is homologically trivial.
%and correspond via \eqref{eq_compar_divisors}.
While the divisor \(D_{\gamma_P}\) depends on the choice of ruling, its square does not, which means it descends to a relative 0-cycle
 \begin{equation}\label{eq_square}
    D_{\gamma_P}^2 \in \CH^2(\widetilde{\cC}^{(2)/\widetilde{U}})^{\Gal(\widetilde{K}/K)}_{\Q}\simeq \CH^2({\cC}^{(2)/{U}})_{\Q}. 
 \end{equation}
 This cycle is the main object of study of this work.
%Similarly, we also get \(Z_{\gamma_P}^2 \in \CH^2(J)_{\Q}\). 

\subsection{Main results}
The core of this article is an explicit description and analysis of the infinitesimal invariants \(\delta D_{\gamma_P}\) (Section \ref{sec_deltagamma_P}) and \(\delta D_{\gamma_P}^2\) (Sections \ref{sec_M2} and \ref{sec_restrict_to_lines}). More precisely, the infinitesimal invariant \(\delta D_{\gamma_P}(m)\) at any point \(m \in U\) belongs to the vector space \(I_1\) of \eqref{eq_where_waldone_gamma_P}, whose geometric meaning comes from its interpretation as \eqref{eq_where_is_waldeltaZ}. Likewise, the infinitesimal invariant \(\delta D_{\gamma_P}^2(m)\) belongs to the vector space \(I_2\) of \eqref{eq_who_I2}, whose geometric meaning comes from its interpretation as \eqref{eq_space_delta2}.

There is an action of \(G\coloneqq\Aut(\proj^1)\times \Aut(\proj^1)\) on the space of infinitesimal invariants \(I_{1}\) such that in the decomposition in irreducible \(G\)-subrepresentations of \(I_1\), the space \(H^0(\cO_Q(3))\) appears as a factor \(I_1^3 \subset I_1\) of multiplicity one. In particular, this space is naturally a quotient of \(I_1\). The following theorem is due to Griffiths.

%\begin{theorem}\label{thm_griffiths}(\cite[Section 6.(d)){GriffithsIII}]
%The equations cutting out a general genus \(4\) curve \(C\) are encoded in the infinitesimal variation of Hodge structure \(d_C \cP \colon H^1(T_C) \to \Hom(H^0(\Omega_C),H^1(\cO_C))\) together with the infinitesimal invariant \(\delta \gamma_P (C)\) associated to a Kuranishi family \(\cC_\Delta \to \Delta\) and the Griffiths-Pirola normal function \(\gamma_P\) restricted to \(\Delta\).
%\end{theorem}
\begin{theorem}\label{thm_griffiths}(\cite[Section 6.(d)]{GriffithsIII})
Let \(C \subset Q_C\) be a general genus \(4\) curve contained in its unique smooth quadric \(Q_C\) and defined by a cubic equation \(t\). Then \(t\) is proportional to the image in \(H^0(\cO_Q(3))\) of the infinitesimal invariant \[\delta D_{\gamma_P} (C)\in I_1.\] 
%associated to a Kuranishi family \(\cC_\Delta \to \Delta\) and the Griffiths-Pirola normal function \(\gamma_P\) restricted to \(\Delta\).
\end{theorem}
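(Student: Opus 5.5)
We compute \(\delta D_{\gamma_P}(C)\) explicitly via Voisin's cohomological description of the first infinitesimal invariant of a divisor. Since \(D_{\gamma_P}\) is a family of divisors, the relevant class is the Dolbeault class \([D]\in H^1(\Omega^1)\) on the total space, restricted to a fibre. Its image under the Koszul-type complex gives \(\delta D_{\gamma_P}(C)\). Through the identifications \eqref{eq_compar_divisors}, this is a class in \(\ker\) or \(\operatorname{coker}\) of the map
\[H^1(T_C)\otimes H^0(K_C)\to H^1(\cO_C)\]
given by the IVHS. Concretely, for \(L=L_1-L_2\), we would use the standard formula for the derivative of the Abel--Jacobi image of a line bundle. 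Take a first-order deformation \(\xi\in H^1(T_C)\) and let \(L_1,L_2\) deform with it, which they do, since the rulings deform with the quadric. Then the derivative of \(\nu\) along \(\xi\) is the extension class of \(L\) restricted to the first-order thickening. The infinitesimal invariant is the induced element of \(\big(H^1(T_C)\otimes H^0(K_C)\big)^\vee\) modulo the image of the IVHS, i.e. a functional \(\xi\otimes\omega\mapsto \langle \xi\cdot\omega,\,\cdot\,\rangle\) twisted by the Atiyah class of \(L\).

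The first key step is to write everything in terms of the quadric. Here \(H^0(K_C)=H^0(\cO_Q(1,1))\) and \(H^1(\cO_C)\cong H^0(K_C)^\vee\). Normal-bundle sequences identify \(H^1(T_C)\) with a quotient of \(H^0(N_{C/\proj^3})=H^0(\cO_C(2))\oplus H^0(\cO_C(3))\) by \(H^0(T_{\proj^3}|_C)\). Split deformations into those moving the cubic \(t\) inside the fixed quadric \(Q\) and those moving \(Q\). For deformations moving only \(t\), the rulings stay fixed, so the relevant piece is controlled by \(H^0(\cO_Q(3))\). The Atiyah class of \(\cO_Q(1,-1)|_C\) paired with \(\xi=\dot t\in H^0(\cO_C(3))\) and \(\omega\in H^0(\cO_Q(1,1))\) gives, via residues or Macaulay duality on \(Q\), a trilinear form. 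Its \(G\)-equivariance forces it to factor through the multiplicity-one component \(I_1^3\cong H^0(\cO_Q(3))\).

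The second step is to pin this element down. By \(G\)-equivariance and the multiplicity-one statement, the image of \(\delta D_{\gamma_P}(C)\) in \(I_1^3\) is an \(\operatorname{Stab}\)-equivariant function of the datum \(t\) (modulo \(H^0(\cO_Q(1,1))\cdot H^0(\cO_Q(2))\) ambiguity). We then compute it by a Jacobian-ring computation. In the Griffiths residue description, the pairing is multiplication in the Jacobian ring of \(C\subset Q\), with \(t\) and the Euler derivation of \(Q\) in each ruling. The difference of the two ruling Euler fields, \(x\partial_x-y\partial_y\) in bihomogeneous coordinates, applied to \(t\) and its partials should produce a class proportional to \(t\) itself after projecting to \(I_1^3\). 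This follows from the bihomogeneity identity \(\sum x_i\partial_{x_i}t=3t\), and similarly in \(y\).

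The main obstacle is this last identification: showing that the projection to \(I_1^3\) is exactly (a nonzero multiple of) \(t\), rather than some other equivariant covariant of \(t\). It also requires that the multiple be nonzero for general \(C\). To handle it, I would first try to show that the map \(t\mapsto\) (projection) is linear in \(t\). Linearity holds because the Atiyah class of \(L\) depends only on \(Q\), and \(t\) enters only through the IVHS multiplication. A \(G_Q\)-equivariant linear endomorphism of the irreducible representation \(H^0(\cO_Q(3))\) is then a scalar by Schur. Nonvanishing of the scalar I would verify on one explicit curve, or deduce from the known nonvanishing of \(\gamma_P\) away from \(\cM_4\setminus U_0\).
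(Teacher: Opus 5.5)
There is a genuine gap, and it sits exactly at the step you flag as the main obstacle. Both your claim that the invariant lands in \(I_1^3\) and your final Schur step require the assignment \(t\mapsto \delta D_{\gamma_P}(C_t)\in I_1\) to be \emph{linear} in \(t\). The reason you give for linearity is not valid. The multiplication \(V\otimes V\to V_2=\Sym^2V/q\) and the class \(c_1(\cO_Q(1,-1))\) do not involve \(t\) at all. Instead, \(t\) enters through operations that are not linear in it:
restriction to \(C_t\);
the lift of \(c_1(\cO_Q(1,-1))_{|C_t}\) along \(H^1(\cO_{C_t}(-3))=H^1(N^*_{C_t/Q})\to H^1(\Omega_{Q|C_t})\);
the Kodaira--Spencer map \(H^0(\cO_Q(3))\to H^1(T_{C_t})\), through which \(V_2\) is identified with cotangent directions;
and Serre duality on \(C_t\).
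In fact linearity is not even meaningful until you fix, for each \(t\), an isomorphism \(V\simeq H^0(K_{C_t})\), which is canonical only up to a scalar. Since \(C_{\lambda t}=C_t\), any choice depending only on the curve makes the map homogeneous of degree \(0\), hence not linear. The Poincar\'e residue normalization \(P\mapsto \mathrm{Res}\,(P\,\Omega_0/qt)\) (with \(\Omega_0\) the standard \(3\)-form on \(\proj^3\)) makes it homogeneous of degree \(1\). But then additivity \(\delta(t_1+t_2)=\delta(t_1)+\delta(t_2)\) relates the invariants of three unrelated curves, essentially contains the whole theorem, and nothing in your setup implies it.

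Equivariance cannot replace linearity. For general \(t\) the stabilizer of \([t]\) in \(G\) is trivial, since it embeds into \(\Aut(C_t)\). So equivariance imposes no condition at a general point, and nonlinear equivariant maps exist. As an \(SL_2\times SL_2\)-module, \(H^0(\cO_Q(3))\) occurs three times in \(\Sym^3H^0(\cO_Q(3))\). Hence there are cubic covariants \(\Phi\) not of the form \(I(t)\,t\), with \(I\) the quadratic invariant, and \(\Phi/I\) is equivariant, homogeneous of degree one, and not proportional to \(t\). Similarly, \(H^0(\cO_Q(1,3))\subset H^0(T_Q(1))\) occurs in \(\Sym^3H^0(\cO_Q(3))\), so equivariance alone does not place \(\delta D_{\gamma_P}\) in \(I_1^3\) either. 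This is why the paper proves membership in \(I_1^3\) by a geometric argument (Theorem \ref{thm_griffiths_completion}), applying Schur only to linear maps such as \(\pi\), \(\sigma\), \(\psi\). It also does not reprove the present statement but cites Griffiths' explicit computation.

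Your remaining ingredients do not close the gap either. By the Euler identities, \(x\partial_x-y\partial_y\) acts on a form of bidegree \((3,3)\) by \(3-3=0\), so it cannot produce \(t\). Pointwise nonvanishing of \(\gamma_P\) does not give \(\delta\gamma_P\neq0\). Even \(\delta\gamma_P\neq0\) does not give a nonzero projection to \(H^0(\cO_Q(3))\) unless you already know \(\delta\gamma_P\in I_1^3\).

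Your reduction to a fixed quadric is the right starting point. Since \(H^1(T_{Q|C})\simeq H^0(L_2-L_1)^*\oplus H^0(L_1-L_2)^*=0\), the family \(\{C_t\}\) with \(Q\) fixed is already complete, and \(L\) is restricted from \(\cO_Q(1,-1)\). What is then needed is an actual computation. The pulled-back invariant is, up to sign, the functional \(s\otimes\omega\mapsto \epsilon_t(s\omega)\) on \(\Ker\big(H^0(\cO_Q(3))\otimes V\to H^1(\cO_{C_t})\big)\), where \(\epsilon_t\in H^1(\cO_{C_t}(-3))\simeq H^0(\cO_{C_t}(4))^*\) lifts \(c_1(\cO_Q(1,-1))_{|C_t}\). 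Comparing this functional explicitly with \(\sigma(t)\) is the real content of Griffiths' theorem, and your proposal does not supply it.
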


Our first main result is a refinement of the above theorem:
\begin{theorem}\label{thm_griffiths_completion}
The infinitesimal invariant \(\delta D_{\gamma_P} (C)\) belongs to the direct summand \(I_1^3\). It follows by Griffiths' theorem that the datum \(\delta D_{\gamma_P} (C)\) is equivalent to that of the cubic \(t\).
\end{theorem}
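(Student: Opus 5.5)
We will compute \(\delta D_{\gamma_P}(C)\) explicitly as a class in \(I_1\) and then exploit equivariance under the symmetry group of the quadric. The first step is to make \(I_1\) concrete. For a divisor on the family of symmetric squares, with fibers of coniveau \(1\), the Voisin description identifies the first infinitesimal invariant with the image of the Dolbeault class \([D_{\gamma_P}]\in H^1(\Omega^1_{\cC^{(2)}})\) restricted to a fibre. Since \(\Pic^0(\cC_m)=\Pic^0(\cC_m^{(2)})\), everything reduces to the curve. The invariant becomes the Koszul-type class in
\[\ker\bigl(H^1(T_C)\otimes H^0(K_C)\to H^1(\cO_C)\bigr)^\vee\]
modulo the image of the IVHS, or dually a class in a quotient of \(H^0(K_C^{\otimes 2})^\vee\)-type spaces. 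Writing \(K_C=L_1\otimes L_2\) and \(H^0(K_C)=H^0(L_1)\otimes H^0(L_2)\simeq V_1\otimes V_2\), with \(V_i=H^0(\cO_{\proj^1}(1))\), we decompose \(I_1\) into \(G\)-isotypic pieces \(\Sym^aV_1\otimes\Sym^bV_2\). The summand \(I_1^3\simeq H^0(\cO_Q(3,3))\) corresponds to \((a,b)=(3,3)\).

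The second step computes the infinitesimal invariant of \(\gamma_P\) directly. A first-order deformation \(\xi\in H^1(T_C)\) moves \(C\) inside \(\proj^3\), and the quadric moves with it. The class \(\xi\) lifts to a normal section in \(H^0(N_{C/\proj^3})=H^0(\cO_C(2))\oplus H^0(\cO_C(3))\) modulo \(H^0(T_{\proj^3}|_C)\). The derivative of \(L_1-L_2\) is the difference of the derivatives of the two rulings restricted to \(C\). Deforming the quadric by \(q'\in H^0(\cO_Q(2))\) changes the rulings, and only the quadric component of the normal direction enters. So \(d\gamma_P(\xi)\in H^1(\cO_C)\) is the coboundary of \(q'\) under the sequence
\[0\to T_C\to T_Q|_C\to \cO_C(3)\to 0,\]
projected to the antisymmetric piece \(T_{\proj^1_1}-T_{\proj^1_2}\) of \(T_Q=\cO(2,0)\oplus\cO(0,2)\). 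Composing with pairing against \(H^0(K_C)\) and quotienting by the IVHS image, we express \(\delta D_{\gamma_P}(C)\) as the bilinear form
\[(\xi,\omega)\mapsto \langle \partial(\text{antisym}(q')),\omega\rangle.\]
This form is built only from \(t\) and the \(G\)-equivariant multiplication maps \(H^0(\cO_Q(a,b))\otimes H^0(\cO_Q(c,d))\to H^0(\cO_Q(a+c,b+d))\) together with Serre duality on \(C\) (\(K_C=\cO_C(1,1)\), \(H^1(\cO_C)^\vee=H^0(\cO_C(1,1))\)).

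The last step is the key. The construction is \(G\)-equivariant in the pair \((Q,t)\), and it is linear in \(t\) after identifying the tangent space of the moduli with cubics modulo \(\mathfrak{g}\cdot t\) and quadric directions. Through Serre duality for \(C\subset Q\) (\(K_Q=\cO(-2,-2)\), so \(\omega_C=\cO_C(1,1)\)), the residue pairing with \(t\) carries the whole class to a single \(\cO(3,3)\)-valued section. Concretely, the antisymmetric coboundary for the bidegree \((2,0)\) and \((0,2)\) pieces gives expressions of the form \(\partial_{x}t\cdot(\ldots)-\partial_{y}t\cdot(\ldots)\). After contracting with \(\omega\in V_1\otimes V_2\) and \(\xi\), we must check that all components in isotypic types other than \((3,3)\) cancel. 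These cancellations come from the Euler relations \(\sum x_i\partial_{x_i}t=3t\) and \(t|_C=0\).

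I expect this verification to be the main obstacle: showing that the projections of \(\delta D_{\gamma_P}(C)\) to the other isotypic summands vanish rather than merely being determined by \(t\). I would first try a weight argument. Consider the torus \(\C^*\times\C^*\subset G\) acting by scaling \(t\) and the two rulings. The invariant transforms with the character of \(t\) twisted by the antisymmetry, and this character should force it to live in the \((3,3)\) component, up to checking that the \(\mathfrak{sl}_2\times\mathfrak{sl}_2\) raising operators act compatibly. If the weight argument is insufficient, because lower summands such as \((1,1)\) share weights, I would compute explicitly on a general \(t\) using the monomial basis. The multiplicity-one statement for \(I_1^3\), combined with Theorem \ref{thm_griffiths}, then gives the equivalence of \(\delta D_{\gamma_P}(C)\) with \(t\).
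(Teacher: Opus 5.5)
There is a genuine gap. The decisive point is that \(\delta D_{\gamma_P}(C)\) has no component in the complement \(H^0(T_Q(1))\simeq X[1,3]\oplus X[3,1]\) of \(I_1^3\). These two summands, not a \((1,1)\)-summand, are the real competitors, since \(I_1\simeq H^0(T_{\proj^3}|_Q(1))\). You never prove this, and the computation meant to lead to it is incorrect.

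\emph{Step 2 fails.} Your claim that only the quadric component of a normal lift of \(\xi\) enters cannot hold. The lift is defined only modulo \(H^0(T_{\proj^3}|_C)\). Moreover \(H^0(N_{C/Q})=H^0(\cO_C(3))\) already surjects onto \(H^1(T_C)\), because \(H^1(T_Q|_C)=0\) when \(L_1\not\simeq L_2\). So every \(\xi\) has a lift with zero quadric component, and your claim would force \(\delta D_{\gamma_P}(C)=0\), contradicting Theorem \ref{thm_griffiths}. In fact, restricting the fixed class \(c_1(\cO_Q(1,-1))\) to curves moving inside a fixed \(Q\) has a nonzero invariant. The resulting formula is also ill-typed: \(q'|_C\in H^0(\cO_C(2))\), while the coboundary of \(0\to T_C\to T_Q|_C\to\cO_C(3)\to 0\) goes from \(H^0(\cO_C(3))\) to \(H^1(T_C)\), not to \(H^1(\cO_C)\).

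\emph{The rescue attempts do not close the gap.} Linearity in \(t\) is asserted without proof. The invariant depends only on \(C\), hence on \(t\) only up to scalars and \(q\cdot V\). Combined with Schur's lemma, linearity would already be the statement you are trying to prove. Equivariance alone is not enough, since nonlinear equivariant maps out of \(X[3,3]\) can have \(X[1,3]\)-components (e.g.\ \(X[1,3]\subset\Sym^3X[3,3]\)). The torus-weight argument fails for two reasons: a \(t\) defining a smooth curve is not a weight vector, and every weight of \(X[1,3]\) and \(X[3,1]\) already occurs in \(X[3,3]\). ``Compute on a general \(t\)'' is not an argument.

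\emph{The missing idea: fix the cubic and move the quadric.} By Lemma \ref{lem_univ_family_fixed_cubic}, for general \(C\) and general \(S\in|\cI_C(3)|\) this gives a Kuranishi family \(\cC=\cQ\cap(S\times B)\). Then \(\delta\gamma_P\) is the image of \(c_1(\cO_{\cQ}(1,-1))\in H^1(\Omega_{\cQ|Q})^0\). Dually, vector fields on \(\cQ\) that are vertical for \(\cQ\to\proj^3\) are killed by \(dt\). They therefore give an \(\Aut(Q)\)-equivariant map \(\psi\colon H^0(T_{\cQ/\proj^3|Q}(1))\to I_1^*\) whose image is annihilated by \(\delta\gamma_P\); this uses exactness of the row coming from \(\cO_Q(-C)=\cO_Q(-3)\).

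Now \(H^0(T_{\cQ/\proj^3|Q}(1))\simeq H^0(T_Q(1))\oplus V\) contains no \(X[3,3]\). Also \(H^0(T_Q(1))\) is \(\Aut(Q)\)-irreducible, since swapping the rulings exchanges \(X[1,3]\) and \(X[3,1]\). Hence the nonzero map \(\psi\) hits the whole dual of the \(H^0(T_Q(1))\)-summand, which forces \(\delta\gamma_P\in I_1^3\). This replaces your unfinished explicit computation by a vanishing statement plus Schur's lemma. The identification with \(t\) then follows from Theorem \ref{thm_griffiths}, because \(\pi\circ\sigma\) is a nonzero multiple of the identity.
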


We then turn towards analyzing the square map 
\[\Sym^2 I_1 \to I_2\]
which is compatible via the infinitesimal invariant to the square map 
\[\CH^1(\cC^{(2)/U}) \ni D_{\gamma} \mapsto D_{\gamma}^2 \in \CH^2(\cC^{(2)/U}).\]
This analysis is used in \cite{Voisin_pirola} and it will play an important role in the next main theorem, which is a nonvanishing result.
\begin{theorem}\label{thm_deltagamma^2_nonvanishing}
For general \(m\in U\), the infinitesimal invariant \(\delta D_{\gamma_P}^2(m) \in I_2\) is nonzero. It follows by \cite[Proposition 1.8]{Voisin94} that
    \[D_{\gamma_P}^2  \neq 0 \ \ \textup{ in } \ \CH_0(\cC_m^{(2)})_{\Q}\]
for very general \(m \in U\).
\end{theorem}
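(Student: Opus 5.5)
The strategy is to make the invariant \(\delta D_{\gamma_P}^2(m)\) explicit and then test it on a suitable auxiliary piece of data where it is visibly nonzero. The second claim then follows formally from the first by \cite[Proposition 1.8]{Voisin94}: a nonzero infinitesimal invariant at a general point forbids \(D_{\gamma_P}^2(m)\) from vanishing in \(\CH_0(\cC_m^{(2)})_\Q\) for very general \(m\). So all the work goes into showing \(\delta D_{\gamma_P}^2(m)\neq 0\).

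\textbf{Step 1: reduce to the square map.} By the compatibility of the infinitesimal invariant with products, \(\delta D_{\gamma_P}^2(m)\) is the image of \(\delta D_{\gamma_P}(m)\otimes\delta D_{\gamma_P}(m)\) under the square map \(\Sym^2 I_1\to I_2\).

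\textbf{Step 2: reduce to a square of a cubic.} By Theorem \ref{thm_griffiths_completion}, \(\delta D_{\gamma_P}(m)\) lies in the summand \(I_1^3\simeq H^0(\cO_Q(3))\), and it is proportional to the cubic \(t\) defining \(C\). Griffiths' theorem gives a nonzero constant there for general \(C\). It therefore suffices to compute the restriction of the square map to \(\Sym^2 I_1^3\) and evaluate it on \(t\cdot t\).

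\textbf{Step 3: write \(I_2\) explicitly.} I would realise \(I_2\) as the cohomology of a Koszul-type complex built from \(\Sym^2 H^0(K_C)\), \(H^0(2K_C)\), the tangent space \(H^1(T_C)\) and the second symmetric product. In this description the square map becomes multiplication of sections on \(Q=\proj^1\times\proj^1\): it sends \(t\cdot t\) to the class of \(t^2\), or of a contraction of \(t\otimes t\) against \(\wedge^2 H^1(T_C)\), in a quotient of the form \(H^0(\cO_Q(a,b))/(\text{image of }t\text{ and of the quadric-related terms})\). Nonvanishing then says that a specific polynomial built from \(t\) does not lie in the ideal-like subspace generated by \(t\) in the relevant bidegree.

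\textbf{Step 4: restrict to lines.} To make this checkable, I would restrict to a line of a ruling, \(\ell\simeq\proj^1\subset Q\). On \(\ell\), \(t\) becomes a binary cubic with three simple roots, namely the points of \(C\cap\ell\), which is a \(g^1_3\) fibre. The image of \(\delta D_{\gamma_P}^2\) then reduces to a class in a space of binary forms modulo the multiples of \(t_{|\ell}\). Concretely, I expect it to be a Jacobian- or Hessian-type expression, for example a covariant built from the derivatives \(\partial t\), which lies in the ideal \((t_{|\ell})\) only if \(t_{|\ell}\) has a repeated root. Nonvanishing would follow from a direct computation for a generic binary cubic, or for one explicit \(t\), using semicontinuity and the irreducibility of \(U\).

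\textbf{Main obstacle.} I expect the hardest part to be Step 3: identifying the square map \(\Sym^2 I_1\to I_2\) concretely enough, including the \(G\)-equivariant decomposition of \(I_2\). In particular one must ensure that the quotient by the image of the "trivial" terms does not kill the class of \(t^2\). I would handle this by tracking the decomposition into \(G=\Aut(\proj^1)^2\)-isotypic pieces and isolating a bidegree in which the trivial terms do not contribute. After that, the restriction to lines in Step 4 makes the final check a finite computation.
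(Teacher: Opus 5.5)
\textbf{Verdict: genuine gap.} Your skeleton matches the paper's: the square map, reduction to $\sigma(t)$ via Theorem \ref{thm_griffiths_completion}, restriction to a line of a ruling, and one explicit example plus openness. However, the step that carries the whole proof, your Step 3, is left as a guess, and the guess is wrong.

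The space $I_2$ depends only on $(V,q)$. By the $E_2$-degeneration it is $\wedge^2V^*\otimes\wedge^2V_2$ modulo $\overline{\nabla}^{1,1}_1\big(V_2\otimes H^1(\Omega_{C^{(2)}})\big)$. Since $\overline{\nabla}^{1,1}_1=\overline{\nabla}^{1,0}_0\otimes\id_{V_2}$ with $\overline{\nabla}^{1,0}_0=\mu$, this image is $M_2\big(\mu(V)\cdot(V^*\otimes V_2)\big)$, and the square map is the $2\times2$-minor map $\psi\mapsto\wedge^2\psi$ on $\Hom(V,V_2)$ (Proposition \ref{prop_M_2}). So $\delta D_{\gamma_P}^2(m)$ is the class of $\wedge^2\sigma(t)$, i.e.\ up to a constant $\sum_{i<j}v_i^*\wedge v_j^*\otimes v_i^*(t)\wedge v_j^*(t)$, built from pairs of first derivatives of $t$. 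It is not a class of $t^2$, or of a contraction of $t\otimes t$, in some $H^0(\cO_Q(a,b))$. Nor is anything divided by multiples of $t$: the denominator of $I_2$ does not involve $t$ at all.

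The second missing ingredient is what makes lines work. Restricting to $\ell$ only makes sense if it gives a map out of $I_2$, i.e.\ if it kills the trivial terms, and that is the key lemma you lack. Take $S=H^0(\cI_\ell(1))\subset V$ and $E=V/S$. The map $R_S\colon V^*\otimes V_2\to S^*\otimes\Sym^2E$ kills $\mu(V)$, because $ax$ vanishes on $\ell$ for $x\in S$ (Lemma \ref{lem_final}). By functoriality of minors this gives $r_{S,2}\colon I_2\to\wedge^2S^*\otimes\wedge^2\Sym^2E$, with no residual quotient at all.

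The restricted class is $v_1^*\wedge v_2^*\otimes v_1^*(t)_{|\ell}\wedge v_2^*(t)_{|\ell}$. Nonvanishing therefore means two binary quadrics on $\ell$ are linearly independent. Your ``Hessian-type covariant'' intuition is in this spirit, but the criterion is not membership in an ideal $(t_{|\ell})$. The paper then checks one explicit curve and gets openness from the maximal-rank locus of the induced bundle map $\mathcal{S}\to\Sym^2\mathcal{E}$ on the incidence variety of curves and lines.

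Your proposed substitute, isolating a $G$-isotypic piece where the trivial terms do not contribute, is not carried out, and it is not a clear route. The subspace $M_2\big(\mu(V)\cdot(V^*\otimes V_2)\big)$ is large, and $\wedge^2\sigma(t)$ is quadratic in $t$ with components in many isotypic pieces. You would still need an explicit computation of the surviving component, which is exactly what the line restriction provides for free.
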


A more precise formulation of the latter statement is the following
\begin{theorem}\label{thm_new_generator}
 Let \(\eta=\Spec{\C(\cM_4)}\) be the generic point and \(\cC_\eta\) the generic genus \(4\) curve. Then the space 
 \(\CH_0(\cC_{\eta}^{(2)})_{\Q,hom}\) is \(1\)-dimensional and generated by \(D_{\gamma_P}^2(\eta)\).
\end{theorem}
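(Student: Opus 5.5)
The plan is to combine an upper bound, namely that every homologically trivial $0$-cycle on $\cC_\eta^{(2)}$ is a multiple of $D_{\gamma_P}^2(\eta)$, with the lower bound $D_{\gamma_P}^2(\eta)\neq 0$, which is Theorem~\ref{thm_deltagamma^2_nonvanishing}. That theorem gives $D_{\gamma_P}^2(m)\ne 0$ in $\CH_0(\cC^{(2)}_m)_\Q$ for very general $m$, hence the generic cycle is nonzero. So the whole work is the upper bound, which I would prove by a Franchetta-type argument using the explicit parametrization of $U$ by curves on $\proj^1\times\proj^1$.

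\textbf{Setup.} Let $Q=\proj^1\times\proj^1$ and let $P\subset \proj(H^0(\cO_Q(3,3)))$ be the open set of smooth curves. Then $\widetilde U$ is dominated by $P$, the fibers being orbits of $G^0=\Aut(\proj^1)^2$, and $U$ is obtained by further quotienting by the swap $\Z/2$. Let $\cC_P\to P$ be the universal curve, with symmetric square $\cC_P^{(2)}\subset Q^{(2)}\times P$.

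Since $\cC_P\times_P\cC_P\to Q\times Q$ is, away from the diagonal, an open subset of a projective bundle (cubics through two given points form a codimension-$2$ linear system), $\CH^*(\cC_P^{(2)})$ is generated by two kinds of classes:
\begin{enumerate}[(i)]
\item classes pulled back from $Q^{(2)}$, times powers of the hyperplane class of $P$;
\item classes supported on the diagonal image $\cC_P\to\cC_P^{(2)}$.
\end{enumerate}
The classes in (ii) are themselves generated by restrictions from $Q\times P$, by the same projective bundle argument applied to $\cC_P\to Q$.

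\textbf{Generic fiber.} Restricting to the generic fiber kills the hyperplane classes. Every class on $\cC_\eta^{(2)}$ comes by spreading out from some open subset, and the generic point of $P$ maps to $\eta$ (after finite extension and $G$-invariants, with $\Q$-coefficients). It follows that $\CH_0(\cC_{\widetilde\eta}^{(2)})_\Q$ is spanned by quadratic expressions in three classes:
\begin{itemize}
\item $D_1$, the divisor $\{x+y: x\in C\}$ attached to a fiber of the first ruling (the symmetrization of $L_1$);
\item $D_2$, defined in the same way for $L_2$;
\item the diagonal class $\delta$.
\end{itemize}
One also needs the point classes of the form $L_i|_C$ pushed along the diagonal, which I would rewrite via $\delta\cdot D_i$.

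The swap $\Z/2$ exchanges $D_1$ and $D_2$ and fixes $\delta$. The invariant quadratic monomials are therefore $D_1^2+D_2^2$, $D_1D_2$, $\delta(D_1+D_2)$ and $\delta^2$, and $(D_1-D_2)^2=D_{\gamma_P}^2$ is one combination of them.

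\textbf{Relations.} Next I would write down the relations among these classes:
\begin{itemize}
\item $D_i^2$ is computed from the map $C^{(2)}\to(\proj^1)^{(2)}=\proj^2$ induced by the $g^1_3$, so it is a multiple of a pullback of a point, which is an effective $0$-cycle class independent of the choice of point. This gives $D_i^2 = c\cdot[\text{fiber pair}]$.
\item $\delta\cdot D_i$ is the pushforward of $L_i$, and $L_1+L_2=K_C$.
\item $\delta^2$ is expressed by the self-intersection formula in terms of $K_C$ on the diagonal.
\end{itemize}
Using these, every invariant class should reduce modulo $(D_1-D_2)^2$ to a multiple of a single canonical $0$-cycle class, for instance $\delta_*K_C$ or $D_1D_2$, whose degree is nonzero. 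Homological triviality then forces the coefficient of that canonical class to vanish, leaving only multiples of $D_{\gamma_P}^2(\eta)$.

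\textbf{Main obstacle.} I expect the relation step to be the hardest. Concretely, I need to check that $D_1D_2$, $D_i^2$ and $\delta_*(\cO_Q(a,b)|_C)$ are all proportional to one another in $\CH_0$ modulo $(D_1-D_2)^2$. My first attempt would use the rational-equivalence relations coming from the two $g^1_3$'s, i.e. the fibrations $C^{(2)}\to\proj^2$ and $C\times C\to\proj^1\times\proj^1$, together with Beauville–Voisin type arguments showing that the intersection of $D_1$ with $D_2$ is a constant cycle coming from the $\proj^1\times\proj^1$ base. A second, more delicate point is that the spreading argument requires generation of $\CH^*(\cC_P^{(2)})$ including the codimension-$2$ degeneracy locus where the two points impose dependent conditions. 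That locus lies over the diagonal of $Q$, and I would handle it by a localization sequence along $\delta$.
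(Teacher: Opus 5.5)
The gap is in the upper bound, exactly at the step you flag as the main obstacle, and the relations you list do not close it. Write $\iota\colon C\to C^{(2)}$, $x\mapsto 2x$. Your relations amount to $\delta\cdot D_i=2\iota_*L_i$, $\delta^2=-2\iota_*K_C$, and $D_i^2$ being the pullback of a point under $C^{(2)}\to(\proj^1)^{(2)}$. After taking swap-invariants, the span of $D_1^2+D_2^2$, $D_1D_2$, $\delta(D_1+D_2)$, $\delta^2$ is the span of $(D_1-D_2)^2$, $D_1D_2$ and $\iota_*K_C$. The homologically trivial invariant classes are therefore spanned by $D_{\gamma_P}^2=(D_1-D_2)^2$ and $2D_1D_2-3\iota_*K_C$. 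Nothing in your list relates $D_1D_2$ to $\iota_*K_C$, so you only get dimension $\le 2$. The appeal to Beauville--Voisin type arguments is a heuristic, not a proof.

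Your spreading argument over $P$ cannot supply this relation by itself. With the quadric fixed, the generic curve over $P$ carries both $L_1$ and $L_2$, the projective-bundle argument only produces a spanning set, and any relation among the resulting classes must be proved separately. Your list of things to check is also too strong as stated. Non-invariant classes such as $\iota_*(L_1-L_2)$, whose Albanese image is $2\gamma_P\neq 0$, are not multiples of $(D_1-D_2)^2$; only swap-invariant combinations can be.

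The missing relation comes from the excess intersection formula. Pull back the Künneth decomposition $[\Delta_Q]=[\mathrm{pt}]\times 1+a\times b+b\times a+1\times[\mathrm{pt}]$ along $C\times C\hookrightarrow Q\times Q$. The right-hand side restricts to $L_1\star L_2+L_2\star L_1$. On the other hand, $(C\times C)\cap\Delta_Q=\Delta_C$ with excess bundle $T_Q|_C/T_C=N_{C/Q}=\cO_Q(3,3)|_C$, whose class is $3K_C$. Hence
\[L_1\star L_2+L_2\star L_1=3\,\Delta_{C*}K_C\quad\text{in } \CH_0(C\times C)\]
for every smooth $(3,3)$-curve, and pushing forward to $C^{(2)}$ gives $2D_1D_2=3\iota_*K_C$. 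With this your route goes through: the invariant part of $\CH_0(\cC^{(2)}_{\widetilde\eta})_\Q$ is spanned by $D_{\gamma_P}^2$ and $\iota_*K_C$. It even yields the exact identity $\gamma_P\star\gamma_P=K\star K-6\Delta_*K=\Gamma_{GG}$, i.e.\ $h=2$ in the paper's notation.

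The paper never needs this relation. Lemma \ref{lem_ring_of_product} spreads over complete intersections in $\proj^3$ with a \emph{varying} quadric, so the generic curve has $\Pic=\Z K$ and $\CH_0(\cC_\eta\times\cC_\eta)_{hom}$ is spanned by $\Gamma_{GG}$ (Corollary \ref{cor_CH0_universalcurvesquared}). The Galois-invariant cycle $\gamma_P\star\gamma_P$ therefore automatically lies in $\Q\,\Gamma_{GG}$. Pushing forward to $\cC_\eta^{(2)}$ and invoking Theorem \ref{thm_deltagamma^2_nonvanishing} finishes the proof.
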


As a direct consequence of Theorem \ref{thm_new_generator}, Corollary \ref{cor_CH0_universalcurvesquared} and Remark \ref{rmk_GG_WP}, we recover the following nonvanishing result for genus \(4\) curves, which was proven for any genus as the main theorem of \cite{GreenGriffiths_0-cycle}.

\begin{corollary}
 \label{thm_GG_reproof}
Let \(\eta=\Spec{\C(\cM_4)}\) be the generic point and \(\cC_\eta\) the generic genus \(4\) curve. The ``interesting 0-cycle" of \cite{GreenGriffiths_0-cycle}, i.e. ,
\[\Gamma_{GG} \coloneqq \pr_1^* [K_{\cC_\eta}] \cap \pr_2^*[K_{\cC_\eta}] - 6 \Delta_{C *}[K_{\cC_\eta}] \]
is non-torsion in \(\CH_0(\cC_\eta\times \cC_\eta)\).    
\end{corollary}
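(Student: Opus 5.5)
The plan is to relate \(\Gamma_{GG}\) to \(D_{\gamma_P}^2(\eta)\) through the symmetric square, and then invoke Theorem \ref{thm_new_generator}. Let \(\pi\colon \cC_\eta\times\cC_\eta\to \cC_\eta^{(2)}\) be the quotient map. Since \(\Gamma_{GG}\) is symmetric under the swap \(\sigma\), we have \(\Gamma_{GG} = \tfrac12 \pi^*\pi_*\Gamma_{GG}\) in \(\CH_0(\cC_\eta\times\cC_\eta)_\Q\). Because \(\pi^*\) is injective rationally (\(\pi_*\pi^* = 2\)), \(\Gamma_{GG}\) is non-torsion if and only if \(\pi_*\Gamma_{GG}\neq 0\) in \(\CH_0(\cC_\eta^{(2)})_\Q\).

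The cycle \(\Gamma_{GG}\) has degree \(36-6\cdot 6=0\), and over \(\eta\) the relevant classes are homologically trivial. So \(\pi_*\Gamma_{GG}\) lies in \(\CH_0(\cC_\eta^{(2)})_{\Q,hom}\), which by Theorem \ref{thm_new_generator} is one-dimensional and spanned by \(D_{\gamma_P}^2(\eta)\). Hence \(\pi_*\Gamma_{GG}=\lambda\, D_{\gamma_P}^2(\eta)\) for some \(\lambda\in\Q\). The whole content of the proof is showing \(\lambda\neq 0\). I expect this is exactly what Corollary \ref{cor_CH0_universalcurvesquared} and Remark \ref{rmk_GG_WP} provide. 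The corollary presumably computes \(\CH_0\) of the universal curve squared over \(\eta\) in terms of the symmetric square together with canonical classes. The remark presumably identifies \(\Gamma_{GG}\), up to a nonzero multiple, with a combination involving \(D_{\gamma_P}^2\).

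If I had to establish \(\lambda\neq0\) myself, I would use the genus \(4\) geometry. On \(C\) we have \(K_C=L_1+L_2\), and \(L_1,L_2\) are \(g^1_3\)'s, so \(2L_1 \sim 2L_2\) modulo \(L\) in an appropriate sense. In the \(\CH_0\) of the product, I would expand \(\pr_1^*K\cdot\pr_2^*K\) using the decomposition \(K=L_1+L_2\). The key inputs are:
\begin{enumerate}[(i)]
\item The pencils of each ruling give relations. For \(D\in|L_i|\), the class \(\sum_{p\neq q\in D}(p,q)\) is constant, and this produces rational equivalences expressing \(\pr_1^*L_i\cdot\pr_2^*L_i\) through diagonal terms \(\Delta_*L_i\).
\item The remaining cross terms \(\pr_1^*L_1\cdot\pr_2^*L_2\) differ from the symmetric expression by a multiple of \(\pr_1^*L\cdot\pr_2^*L\).
\end{enumerate}
Its pushforward to \(C^{(2)}\) is, up to diagonal corrections, \(D_{\gamma_P}^2\), with \(D_{\gamma_P}\) corresponding to \(L\) via \eqref{eq_compar_divisors}. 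One then checks that the diagonal corrections cancel against \(-6\Delta_*K\), leaving \(\Gamma_{GG}=c\,\pi^*D_{\gamma_P}^2\) with \(c\) a nonzero constant (something like \(-\tfrac12\)). Non-torsionness then follows from Theorem \ref{thm_new_generator}, i.e.\ from Theorem \ref{thm_deltagamma^2_nonvanishing}.

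The main obstacle is the bookkeeping in this identity, in particular verifying that the coefficient \(c\) is nonzero rather than accidentally cancelling. To control it, I would first do the computation on cohomology classes and degrees as a consistency check. Alternatively, I would compare infinitesimal invariants: show \(\delta\Gamma_{GG}\) is a nonzero multiple of \(\delta D_{\gamma_P}^2\) in \(I_2\).
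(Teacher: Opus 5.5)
\textbf{There is a genuine gap: the main reduction runs in the wrong direction.} Theorem \ref{thm_new_generator} says \(\CH_0(\cC_\eta^{(2)})_{\Q,hom}\) is spanned by \(D^2_{\gamma_P}(\eta)\). That only gives \(\pi_*\Gamma_{GG}=\lambda\, D^2_{\gamma_P}(\eta)\), and \(\lambda=0\) is perfectly compatible with it. So all of the content is deferred to statements whose content you guessed, and the guesses are wrong.

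Corollary \ref{cor_CH0_universalcurvesquared} says that \(\CH_0(\cC_\eta\times\cC_\eta)_{hom}\) is \emph{generated by} \(\Gamma_{GG}\). This follows from the Franchetta-type Lemma \ref{lem_ring_of_product}: \(\CH^*(\cC_\eta\times\cC_\eta)\) is generated by \(\pr_1^*K\), \(\pr_2^*K\) and \(\Delta_*[\cC_\eta]\), so the only degree-zero \(0\)-cycles are the multiples of \(\Gamma_{GG}\). Remark \ref{rmk_GG_WP} is only an attribution, not an identification with \(D^2_{\gamma_P}\).

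The missing idea is to use this generation statement in the opposite direction.
\begin{enumerate}[(a)]
\item The cycle \(2\,\gamma_P\star\gamma_P\) is Galois-invariant, so it descends to a homologically trivial \(0\)-cycle on \(\cC_\eta\times\cC_\eta\). Hence \(\gamma_P\star\gamma_P=\frac h2\Gamma_{GG}\) for some integer \(h\).
\item Pushing forward gives \(D^2_{\gamma_P}(\eta)=2q_*(\gamma_P\star\gamma_P)=h\,q_*\Gamma_{GG}\).
\item Since \(D^2_{\gamma_P}(\eta)\neq 0\) in \(\CH_0(\cC^{(2)}_\eta)_\Q\) by Theorem \ref{thm_deltagamma^2_nonvanishing}, this forces both \(h\neq 0\) and \(q_*\Gamma_{GG}\) non-torsion. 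Hence \(\Gamma_{GG}\) is non-torsion.
\end{enumerate}
In this direction the nonvanishing of the coefficient is automatic. The ``bookkeeping obstacle'' you worry about does not arise, and no explicit value of \(h\) is needed.

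\textbf{Your fallback computation does not close the gap.} Relation (i) is tautological. For \(D=p_1+p_2+p_3\in|L_i|\) it only rewrites \(\pr_1^*L_i\cdot\pr_2^*L_i=\Delta_*L_i+\sum_{j\neq k}(p_j,p_k)\). This introduces a new unknown class \(R_i\cdot\pr_1^*L_i\), where \(R_i\) is the residual trigonal correspondence. It gives no relation with \(K\star K\) and \(\Delta_*K\).

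Nothing in your sketch produces the claimed cancellation of ``diagonal corrections'' against \(-6\Delta_*K\), or an identity \(\Gamma_{GG}=c\,\pi^*D^2_{\gamma_P}\). Any such relation, for instance \(L_1\star L_1+L_2\star L_2=\alpha\, K\star K+\beta\,\Delta_*K\), must come from the generation lemma. That lemma gives it only with an uncomputed coefficient, which brings you back to the argument above.

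The infinitesimal alternative is also not available as stated. \(K\) is not fiberwise null-homologous, so \(\delta\Gamma_{GG}\) is not computed by the square map \(\overline{M_2}\). Computing it directly is essentially the original Green--Griffiths proof, which you have not sketched.
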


\subsection*{Acknowledgements}
I would like to thank my PhD advisor Claire Voisin for prompting me to study infinitesimal invariants of VHS, as well as for sharing many of her ideas and insights on this problem. I would also like to thank Frédéric Han, for sharing some very useful computations obtained with the software LiE and for teaching me how to use it myself. The author benefited from the support of
ERC Synergy Grant 854361 HyperK

\section{Structure of \texorpdfstring{\(\CH_0(\cC_\eta\times \cC_\eta)_{hom}\)}{chowCKxCK}}\label{sec_structure_ofCH0(CxC)}
In this section, \(\cC \to \cM_4\) denotes the universal genus \(4\) curve over the moduli stack, and \(\eta\) the generic point of \(\cM_4\). 
The next lemma describes completely the Chow ring \(\CH^*(\cC_{\eta} \times \cC_{\eta})\). It is proven using the techniques of \cite[Section 2]{Voisin2013} (see also \cite{Fu_Laterveer_Vial2021}), which work for any family of complete intersection: we give the proof for completeness. Note that the first paragraph of the proof below amounts to the Franchetta conjecture for the special case of genus \(4\).

\begin{lemma}\label{lem_ring_of_product}
    The ring \(\CH^*(\cC_{\eta} \times \cC_{\eta})\) is generated by the \(1\)-cycles
    \begin{equation}\label{eq_generators_of_product}
        \pr_1^* ([K_{\cC_{\eta}}]) \ \ \  , \ \ \  \pr^*_2([K_{\cC_{\eta}}]) \ \ \ , \ \ \ \Delta_* [\cC_{\eta}].
    \end{equation}
\end{lemma}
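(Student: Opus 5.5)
The plan is to use the spreading-out technique of \cite[Section 2]{Voisin2013}: realize \(\cC_\eta\) and \(\cC_\eta\times\cC_\eta\) as generic fibers of families over a parameter space of complete intersections, which is an open subset of a projective bundle. The Chow groups of the total space can then be computed by a stratification argument.

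\emph{Parametrization.} Let \(B\subset \proj(H^0(\proj^3,\cO(2)))\times_{\,}\proj(H^0(\proj^3,\cO(3))/H^0(\cO(2))\cdot H^0(\cO(1)))\) be the open set of pairs \((q,t)\) cutting out a smooth curve; more simply, take \(B\) to be an open subset of the space of pairs \((q,c)\) with \(q\) a quadric and \(c\) a cubic. Over \(B\) we have the universal curve \(\mathcal{X}\to B\). Since the canonical model of a general genus \(4\) curve is such a complete intersection, the map \(B\to\cM_4\) is dominant and its generic fiber is a \(\PGL_4\)-orbit, so \(\C(B)\) is a rational (purely transcendental up to the \(\PGL_4\) action) extension of \(\C(\cM_4)\). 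Pulling back cycles, \(\CH^*(\cC_\eta\times\cC_\eta)_\Q\) injects into \(\CH^*(\mathcal{X}_{\eta_B}\times_{\eta_B}\mathcal{X}_{\eta_B})_\Q\): the composite of pull-back with a norm (push-forward) along a finite multisection of \(\PGL_4\to\) orbit is multiplication by a nonzero integer, and the rational part contributes nothing. So it suffices to show that the latter ring is generated by the restrictions of \(h_1=\pr_1^*\cO(1)\), \(h_2=\pr_2^*\cO(1)\) and the diagonal. Since \(K=\cO(1)_{|C}\), the classes \(h_i\) restrict to \(\pr_i^*[K]\).

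\emph{Single curve (Franchetta).} Consider \(\mathcal{X}\subset B\times\proj^3\). The projection \(\mathcal{X}\to\proj^3\) has fibers that are open subsets of linear subspaces of the parameter space: the pairs \((q,c)\) vanishing at the given point. Hence \(\mathcal{X}\) is an open subset of a projective bundle over \(\proj^3\), and \(\CH^*(\mathcal{X})\) is generated by \(\cO(1)\) and pull-backs from \(B\). Restricting to the generic fiber kills the classes from \(B\), so \(\CH^*(\mathcal{X}_{\eta_B})\) is generated by \(h\). This is exactly the Franchetta statement \(\CH^1(\cC_\eta)_\Q=\Q K\).

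\emph{Product.} For \(\mathcal{X}\times_B\mathcal{X}\to\proj^3\times\proj^3\), stratify the target by the diagonal \(\Delta_{\proj^3}\) and its complement. Over a pair of distinct points \((x,y)\), the conditions that \(q\) and \(c\) vanish at both points are independent linear conditions, since quadrics and cubics separate two points. So over the complement of the diagonal we get an open subset of a projective bundle, whose Chow ring is generated by \(h_1,h_2\) and classes from the base. Over \(\Delta_{\proj^3}\), the preimage is \(\Delta_{\mathcal{X}}\cong\mathcal{X}\), whose Chow ring is generated by \(h\) by the previous step. The localization sequence
\[\CH_*(\Delta_{\mathcal X})\to\CH_*(\mathcal{X}\times_B\mathcal{X})\to\CH_*(\text{complement})\to 0\]
then shows that \(\CH^*(\mathcal{X}\times_B\mathcal{X})\) is generated as a group by polynomials in \(h_1,h_2\), push-forwards \(\Delta_*(h^k)=h_1^k\cdot\Delta\), and classes from \(B\). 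Restricting to the generic point gives generation by \(h_1,h_2,\Delta\).

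\emph{Main obstacle.} The delicate point is the projective bundle claim over the complement of the diagonal. The fibers must be of constant dimension, i.e.\ the evaluation at two distinct points must be surjective for both the quadric and the cubic factors. If \(B\) is parametrized more cleverly (cubics modulo \(q\cdot\)linear forms), this needs checking, which is why I prefer the naive parametrization by all pairs \((q,c)\). The transfer from \(B\) to \(\cM_4\) via the \(\PGL_4\)-action and the finite multisection also needs care.
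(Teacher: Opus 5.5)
Your proposal is essentially the paper's proof. It uses the same incidence varieties over a space of pairs \((q,t)\), the projective-bundle structure over \(\proj^3\) and over \(\proj^3\times\proj^3\setminus\Delta_{\proj^3}\), the localization sequence with the diagonal stratum \(\Delta_{\mathcal X}\), and the vanishing of classes pulled back from \(B\) on the generic fiber.

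The descent step from \(\C(B)\) to \(\C(\cM_4)\) is something the paper skips, since it simply identifies the generic fiber over \(B\) with \(\cC_\eta\). Your version of that step is slightly off in two ways. First, the fiber of \(B\to\cM_4\) is not just a \(\PGL_4\)-orbit: it is a \(\PGL_4\)-orbit times an open subset of \(\proj(H^0(\cI_C(3)))\simeq\proj^4\). Second, that fiber is rational over \(\C(\cM_4)\), because the generic curve has no automorphisms. Specialization at a rational point then makes the pull-back injective integrally. So no norm argument is needed, and you obtain the lemma as stated rather than only its \(\Q\)-version (although the \(\Q\)-version is all that is used later).
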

\begin{proof}
Let \(B\) be the open locus in \(\proj(H^0(\cO_{\proj^3}(2)\oplus  \cO_{\proj^3} (3))\) parametrizing those pairs \((q, t)\) of nonzero quadric and cubic polynomials such that their vanishing loci \(Q\) and \(T\) are smooth surfaces with smooth intersection. Let % https://q.uiver.app/#q=WzAsMyxbMCwwLCJcXGNDIl0sWzEsMCwiXFxwcm9qXjMiXSxbMCwxLCJCIl0sWzAsMSwiXFxzaWdtYSJdLFswLDIsImYiLDJdXQ==
\[\begin{tikzcd}
	X & {\proj^3} \\
	B
	\arrow["\sigma", from=1-1, to=1-2]
	\arrow["f"', from=1-1, to=2-1]
\end{tikzcd}\] 
be the natural incidence variety, where \(X_{(Q,T)}=f^{-1}(Q,T) \xhookrightarrow{\sigma_{(Q,T)}} \proj^3 \) is the genus four curve \(Q\cap T\) with its canonical embedding. The corresponding map to the stack of genus \(4\) curves \(B \to \cM_4\) is dominant, since the general genus \(4\) curve is such a complete intersection.
For this reason, the generic fiber of \(f \colon \cC \to B\) is the generic genus \(4\) curve \(\cC_{\eta}\). We remark that \(\sigma\) is a projective bundle, hence by \cite[Theorem 3.3(b)]{Fulton_int_theory} the Chow ring of \(\cC\) is generated by the cycle classes of \(\sigma^* \cO_{\proj^3}(1)=K_{\cC/B}\) and \( f^*\cO_B(1)\). However, the latter vanishes when restricted to \(\cC_b\); since the restriction \(\CH_0(\cC)\to \CH_0(\cC_{\eta})\) is surjective by the localization exact sequence for Chow groups, we conclude that \(\CH^*(\cC_{\eta})\) is generated by only \(K_{\cC_{\eta}}\), or in other words \(\Pic(\cC_{\eta})=\Z K_{\cC_{\eta}}\). 

If we now take the incidence variety 
% https://q.uiver.app/#q=WzAsMyxbMCwwLCJcXGNDXFx0aW1lc19CIFxcY0MiXSxbMSwwLCJcXHByb2peMyBcXHRpbWVzIFxccHJval4zIl0sWzAsMSwiQiJdLFswLDEsIlxcc2lnbWEiXSxbMCwyLCJmIiwyXV0=
\[\begin{tikzcd}
	{\cC\times_B \cC} & {\proj^3 \times \proj^3} \\
	B
	\arrow["\Sigma", from=1-1, to=1-2]
	\arrow["g"', from=1-1, to=2-1]
\end{tikzcd}\]
then \(\Sigma\) is still a projective bundle away from the diagonal \(\im \Delta_{\proj^3}\) (the fiber above a distinct pair of points \((x,y)\in \proj^3\times \proj^3\) is just the projective subspace of codimension two in \(B\) of those pairs of polynomials \((q,t)\) such that \(Q\) and \(T\) pass through both \(x\) and \(y\)). 
Once again by \cite[Theorem 3.3(b)]{Fulton_int_theory}, together with the localization exact sequence for Chow groups, 
we obtain that \(CH^*(\cC \times_B \cC) \) is spanned by \((\Delta_{\cC_{\eta}})_*\CH^*(\cC_{\eta})\) together with the subring generated by \([\pr_1^*K_{\cC/B}] ,[\pr_2^*K_{\cC/B}]\) and \([g^* \cO_B(1)]\). After surjecting onto \(\CH^*(\cC_{\eta}\times \cC_{\eta})\), the latter class vanishes.
We conclude by remarking that the \(0\)-cycle \( \Delta_{\cC*} [K_{\cC}]\) can be obtained by intersecting the second and third \(1\)-cycles in \eqref{eq_generators_of_product}.

\end{proof}
For any \(X\) smooth projective variety and any two cycles \( \alpha, \beta \in \CH^*(X)\), we use the notation
\[\alpha \star \beta \coloneqq \pr_1^* \alpha \cap \pr_2^* \beta  \in \CH^*(X\times X),\]
%\[\alpha \boxtimes \beta \coloneqq \pr_1^* \alpha + \pr_2^* \beta  \in \CH^*(X\times X).\]
%If \(\dim X =1\), then 

\begin{corollary}\label{cor_CH0_universalcurvesquared}
    The abelian group \(\CH_0(\cC_{\eta} \times \cC_{\eta})_{hom}\) is generated by the cycle \[\Gamma_{GG}\coloneqq[K_{\cC_{\eta}}] \star [K_{\cC_{\eta}}] - 6 \Delta_{\cC_{\eta} *}[K_{\cC_{\eta}}].\]
\end{corollary}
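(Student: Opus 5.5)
Lemma \ref{lem_ring_of_product} gives explicit generators for the whole ring, so the plan is to list every $0$-cycle those generators produce, compute their degrees, and read off the degree-zero part. Write $K=[K_{\cC_\eta}]\in\CH^1(\cC_\eta)$, so $\deg K=6$, and $\Delta=\Delta_*[\cC_\eta]\in\CH^1(\cC_\eta\times\cC_\eta)$. Since $\CH_0(\cC_\eta\times\cC_\eta)=\CH^2$, and every generator in \eqref{eq_generators_of_product} has codimension $1$, the group $\CH^2$ is spanned over $\Z$ by the products of two of the three generators $\pr_1^*K$, $\pr_2^*K$, $\Delta$.

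The first step is to reduce these six products to a short list.
\begin{itemize}
\item $\pr_1^*K\cdot\pr_1^*K=\pr_1^*(K^2)=0$, since $K^2=0$ on a curve; likewise $\pr_2^*K\cdot\pr_2^*K=0$.
\item $\pr_1^*K\cdot\pr_2^*K=K\star K$.
\item $\pr_i^*K\cdot\Delta=\Delta_*(\Delta^*\pr_i^*K)=\Delta_*K$ by the projection formula, since $\pr_i\circ\Delta=\id$.
\item $\Delta\cdot\Delta=\Delta_*(c_1(N_\Delta))=\Delta_*(c_1(T_{\cC_\eta}))=-\Delta_*K$ by the self-intersection formula.
\end{itemize}
Hence $\CH_0(\cC_\eta\times\cC_\eta)$ is generated by $K\star K$ and $\Delta_*K$.

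The second step is degrees: $\deg(K\star K)=6\cdot 6=36$ and $\deg\Delta_*K=6$. A class $a\,K\star K+b\,\Delta_*K$ with $a,b\in\Z$ is homologically trivial, i.e. of degree $0$ since $\cC_\eta\times\cC_\eta$ is geometrically connected, exactly when $36a+6b=0$, that is $b=-6a$. Such a class equals $a\,\Gamma_{GG}$, so $\Gamma_{GG}$ generates $\CH_0(\cC_\eta\times\cC_\eta)_{hom}$.

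The only point needing care, and the one I expect to be the main obstacle, is that Lemma \ref{lem_ring_of_product} provides generators of the ring. This means $\CH^2$ is spanned additively by monomials of total codimension $2$, which here are exactly the degree-two products above. One should also confirm the lemma is stated integrally and not just rationally. If it holds only rationally, the same argument shows that $\Gamma_{GG}$ generates the group after tensoring with $\Q$.
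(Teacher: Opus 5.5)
Your proof is correct and follows the paper's own argument: you keep only the degree-two monomials in the three generators of Lemma~\ref{lem_ring_of_product}, observe that only \(K\star K\) and \(\Delta_*K\) survive, and solve \(36a+6b=0\). Your integrality caveat is unnecessary, since the lemma is stated and proved integrally using only the projective bundle formula and localization. Your sign \(\Delta\cdot\Delta=-\Delta_*K\) is also the correct one, whereas the paper writes this self-intersection without the minus sign; the slip is harmless for the span.
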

\begin{remark}\label{rmk_GG_WP}
    The cycle \(\Gamma_{GG}\) is the ``interesting \(0\)-cycle" studied by Green and Griffiths in \cite{GreenGriffiths_0-cycle}. 
\end{remark}
\begin{proof}
    By Lemma \ref{lem_ring_of_product}, it suffices to take the degree zero integral combinations of pairwise intersections of the cycles in \eqref{eq_generators_of_product}. Since \((\pr_i^*[K_{\cC_{\eta}}])^2=0\) and \((\Delta_{\cC_{\eta}*}[\cC_{\eta}])^2=[\pr_i^*K_{\cC_{\eta}}] \cap \Delta_{\cC_{\eta}*}[\cC_{\eta}]\), the only generators for the abelian group \(\CH_0(\cC_{\eta} \times \cC_{\eta})_{hom}\) are
    \[ [\pr_1^* [K_{\cC_{\eta}}] \cap \pr_2^* [K_{\cC_{\eta}}] = [K_{\cC_{\eta}}] \star [K_{\cC_{\eta}}] \ \ , \ \ [\pr_i^*K_{\cC_{\eta}}] \cap \Delta_{\cC_{\eta}*}[\cC_{\eta}]= \Delta_{\cC_{\eta}*}[K_{\cC_\eta}].\]
    Moreover, their degree is computed to be
    \[\deg \big([K_{\cC_{\eta}}] \star [K_{\cC_{\eta}}] \big)=36 \ \ , \ \ \deg \Delta_{\cC_{\eta}*}[K_{\cC_{\eta}}] =6\]
    hence the claim. 
\end{proof}

%For every \(\gamma \in \CH^1(C)_0\), define 
%\[\gamma \boxtimes \gamma\coloneqq \pr_1^* \gamma + \pr_2^* \gamma \in \CH^1(C\times C).\]
%We then have \((\gamma \boxtimes \gamma)^2 =2 \, \gamma \star \gamma \in \CH_0(C\times C)_0\). 
Let \(\eta \in \cM_4\) and \(\widetilde{\eta} \in \widetilde{\cM_4}\) denote the respective generic points. 
We now show how Theorem \ref{thm_deltagamma^2_nonvanishing} implies Theorem \ref{thm_new_generator}. 
\begin{proof}[Proof of Theorem \ref{thm_new_generator}]
Just as before, for any \(Z \in \CH^1(\cC_{\widetilde{\eta}})_{hom}\), we have 
\[2 \, Z \star Z \in  \CH^2(\cC_{\eta}\times \cC_{\eta})_{hom},\] hence by Corollary \ref{cor_CH0_universalcurvesquared}, 
\(Z \star Z = \frac{h}{2} \, \Gamma_{GG}\)
for some \(h \in \Z\). In particular, if we take \(Z=\gamma_P(\widetilde{\eta})\), we get a well defined \[2(\gamma_P \star \gamma_P)(\eta) \in \CH_0(\cC_\eta \times \cC_\eta)_{hom}\] such that
\begin{equation}\label{eq_comparison_GP_GG}
    (\gamma_P \star \gamma_P)(\eta) = \frac{h}{2} \, \Gamma_{GG}. 
\end{equation} 

Let \(q \colon \cC_\eta \times \cC_\eta \to \cC_\eta^{(2)}\) denote the quotient map: then 
    \[2q_*(\gamma_P \star \gamma_P)(\eta) = D^2_{\gamma_P} (\eta) \ \ \ \textup{in} \ \ \CH^2(\cC_\eta^{(2)}).\]
By hypothesis, \(D_{\gamma_P}^2(\eta) \in \CH_0(C)_{\Q,hom}\) is nonzero, hence \(h\neq 0\). Corollary \ref{cor_CH0_universalcurvesquared} then implies that \[\CH_0(\cC_\eta^{(2)})_{hom}=\frac{1}{2h} \Z \, \cdot D_{\gamma_P}^2(\eta),\]
which concludes the proof. 

\end{proof}

\section{Infinitesimal invariants of genus \texorpdfstring{\(4\)}{} curves}\label{sec_inf_invariants}
\subsection{Infinitesimal invariants}\label{subsec_inf_invariants}
We briefly recall the construction of first infinitesimal invariants as a cohomological invariant (see \cite[Section 5]{Voisin_book_II}).

Let \(X \xrightarrow{f} B\) be a smooth projective family of varieties and \(Z\hookrightarrow X\) a codimension \(k\) cycle, to which one can associate its \textit{Dolbeault cohomology class}
\[[Z] \in H^k(\Omega_X^k)\]
Let us suppose that \(Z\) is fiberwise nullhomologous, i.e., \begin{equation}\label{eq_nullhom_condition}
    [Z]_{|X_b} \in L^1H^k(\Omega^k_{X|X_b})\coloneqq \Ker (H^k(\Omega^k_{X|X_b}) \to H^k(\Omega^k_{X_b})).
\end{equation}

This condition translates to the fact that the cycle class \([Z]\in H^{k}(X,\Omega^k_X)\) maps to zero through the composition 
\begin{equation}\label{eq_composition_1}
    H^{k}(X,\Omega^k_X) \to H^0(B,R^kf_* \Omega^k_{X})\to H^0(B,R^kf_* \Omega^k_{X/B}).
\end{equation}

The \textit{infinitesimal invariant} \(\delta Z\) is by definition the image of \([Z]\) in the middle space of the above sequence: its value at each point \(b\in B\) is just the cycle class \(\delta Z (b)=[Z]_{|X_b}\).

Following \cite[Section 5]{Voisin_book_II}), we will now describe more precisely the spaces \(L^1H^i(\Omega^k_{X|X_b})\) via the spectral sequence associated to the descending filtration \begin{equation}
    L^i\Omega_X^k\coloneqq f^*\Omega^i_B \wedge \Omega^{k-i}_X \subset \Omega^k_X
\end{equation}
induced by the cotangent exact sequence of \(f \colon X\to B\). The intermediate quotients for this filtration are of the form 
\[\Gr_L^i\Omega_X^k \simeq f^*\Omega_B^i \otimes  \Omega^{k-i}_{X/B} .\]
There is an induced filtration on higher direct images \(L^i R^kf_*\Omega^k_X \coloneqq \im ( R^kf_* L^i\Omega^k_X \to R^kf_* \Omega^k_X) \) and a spectral sequence
\begin{equation}\label{eq_ss1}
    \Omega_B^p \otimes R^{p+q}f_* \Omega_{X/B}^{k-p} =E_1^{p,q} \Longrightarrow E^{p+q}=R^{p+q}f_*\Omega^k_X .
\end{equation}
At any point \(b \in B\) we have a similar spectral sequence
\[\Omega_{B,b}^p \otimes H^{p+q}(X_b, \Omega_{X_b}^{k-p}) =E_1^{p,q} \Longrightarrow E^{p+q}=H^{p+q}(X_b,\Omega^k_{X|X_b}).\]

It is proven in \cite[Proposition 2.5]{Voisin_pirola} that the above spectral sequences degenerate at the second page. In the case of families of abelian varieties, the proof is immediate (see \cite[Remark 2.6]{Voisin_pirola}).
Recall moreover (\cite[Proposition 5.9]{Voisin_book_II}) that under the identifications \(R^{p+q}f_* \Omega_{X/B}^{k-p} \simeq \cH^{k-p,p+q}\), the differentials
\[d_1^{p,q} \colon E_1^{p,q}\rightarrow E_1^{p+1,q}\]
coincide with the \(\cO_B\)-linear maps induced by the Gauss-manin connection:
%\[\overline{\nabla}^{p,k-p} \colon \Omega_B^p \otimes R^{k}f_* \Omega_{X/B}^{k-p} \to \Omega_B^{p+1} \otimes R^{k+1}f_* \Omega_{X/B}^{k-p-1}\]
\[\overline{\nabla}^{k-p,p+q}_p \colon \Omega_B^p \otimes \cH^{k-p,p+q} \to \Omega_B^{p+1} \otimes \cH^{k-p-1,p+q+1}\]
By what has been observed above, the condition \eqref{eq_nullhom_condition} amounts to \(\delta Z \in H^0(B,L^1R^k\Omega^k_X)\), thus it defines a global section of the quotient sheaf \(\Gr_L^1R^kf_*\Omega_{X}^k=E^{1,k-1}_\infty\), i.e.,  \[\delta_1 Z \in H^0(B,E^{1,k-1}_\infty),\]
which we call the \textit{first infinitesimal invariant} of \(Z\).
By the degeneration statement mentioned above, we have
\begin{equation}\label{eq_delta_1_inclusions}
   E^{1,k-1}_\infty = E_2^{1,k-1}=\frac{\Ker \overline{\nabla}^{k-1,k}_1}{\im \overline{\nabla}^{k,k-1}_0}\subset \frac{\Omega_B \otimes \cH^{k-1,k}}{\im \overline{\nabla}^{k,k-1}_0}. 
\end{equation}

Griffiths' infinitesimal invariant \(\delta \nu_Z\) associated to the normal function \(\nu_Z\) is a global section of the latter sheaf: by \cite[Theorem 7.14]{Voisin_book_II}, we have that \(\delta_1 Z\) coincides with \(\delta\nu_Z\), i.e. ,
\[\delta \nu_Z (b)= \delta_1 Z(b) \ \ \ \textup{in} \ \ \ \frac{\Omega_{B,b}\otimes H^{k-1}(\Omega^k_{X_b})}{\overline{\nabla}^{k,k-1}_0 (H^k(\Omega^{k-1}_{X_b}))}.\]

When \(k=1\), the inclusions in \eqref{eq_delta_1_inclusions} are actually equalities, hence
\begin{equation}\label{eq_where_is_waldeltaZ}
   L^1H^1(\Omega^1_{X|X_b}) = E_2^{1,0} (b)=\frac{\Omega_{B,b} \otimes H^1(\cO_{X_b})}{ \overline{\nabla}^{1,0}_0(H^0(\Omega_{X_b}))}.  
\end{equation}
Observe also that if the relative dimension of \(X/B\) is one, then \(\delta Z= \delta_1 Z\).

\subsection{Second infinitesimal invariant}

Suppose now that \(\delta Z\in H^0(B, L^2R^kf_* \Omega^k_{X|X_b})\)
In this case \(\delta_1 Z=0\), however we can consider the image of \(\delta Z\) in the next intermediate quotient 
\begin{equation}\label{eq_space_delta2}
     \delta_2 Z \in H^0(\Gr_L^2 R^kf_*\Omega_{X}^k),
     %=H^0(E^{2,k-2}_\infty)
\end{equation}
 which we call the \textit{second infinitesimal invariant}. 

An example of such a cycle \(Z\) is given by \(Z=\alpha^2\) for an \(\alpha \in \CH^h(X)\) which is fiberwise nullhomologous as in the previous paragraph, since the Dolbeault  cycle class of an intersection product is the cup-product of the respective Dolbeault cycle classes, and the cup-product  respects the filtration, i.e., 
\[\smallsmile \colon L^iH^k(\Omega_X^k)\otimes L^j H^k(\Omega_X^k) \to L^{i+j}H^{2k}(\Omega_X^{2k}).\] 
It is clear from the definitions that \(\delta \alpha \smallsmile \delta \alpha =\delta ( \alpha^2) \in H^0(B, L^2R^kf_* \Omega^k_{X|X_b})\), which implies that the induced pairing

\begin{equation}\label{eq_graded_respected}
    \smallsmile \colon \Gr_L^iH^k(\Omega_X^k)\otimes \Gr_L^j H^k(\Omega_X^k) \to \Gr_L^{i+j}H^{2k}(\Omega_X^{2k})
\end{equation}
maps \(\delta_1 \alpha \otimes \delta_1 \alpha \) to \(\delta_2 (\alpha^2)\).

Once again by the degeneration of \eqref{eq_ss1} from \cite[Proposition 2.5]{Voisin_pirola}, we have
\begin{equation}\label{eq_space_geometric_I2}
    E^{2,k-2}_\infty = E^{2,k-2}_2= \frac{\Ker \overline{\nabla}^{k-2,k}_2}{\im \overline{\nabla}^{k-1,k-1}_1} \subseteq \frac{\Omega^2_B \otimes \cH^{k-2,k}}{\im \overline{\nabla}^{k-1,k-1}_1},
\end{equation}
and similarly at any point \(b\in B\).
\section{The first invariant  %\texorpdfstring{\(\delta \gamma_P\)}{Z=gammaPsquare}
%,  
\texorpdfstring{\(\delta D_{\gamma_P}\)}{Z=gammaP}
}\label{sec_deltagamma_P}
%Let \(\cC\to \cM_4\) be the universal curve on the moduli stack of genus \(4\) curves, let \(J\to \cM_4\) be the associated Jacobian fibration and let \(m=\{C\}\in \cM_4\) be the general point, which in particular means \(C\) is not hyperelliptic. 
%Let \(\widetilde{\cC} \to \widetilde{U}\) and \(\widetilde{J} \to \widetilde{U}\) be 
This section is devoted to the proof of Theorem \ref{thm_griffiths_completion}. We keep the same notation as in the introduction.
The Griffiths-Pirola normal function \(\gamma_P\) defines a cycle in \(\CH^1(\widetilde{\cC})\) which we still denote by \(\gamma_P\).
By \eqref{eq_where_is_waldeltaZ}, we have an associated infinitesimal invariant \(\delta \gamma_P=\delta_1 \gamma_P\) whose value at any point of the two points \(m_1, m_2 \in \widetilde{U}\) lying over \(m=[C]\) is a class 
\[\delta \gamma_P(m_i) \in \frac{H^1(\cO_{C}) \otimes H^0(K^{\otimes 2}_C)}{ \overline{\nabla}^{1,0}_0(H^0(K_{C}))}.\]

Since \(\delta \gamma_P (m_1)=- \delta \gamma_P (m_2)\), the class \(\delta \gamma_P (m)\) is well-defined up to sign. As we will now explain, the latter space does not actually depend on \(\cC \to U\) but only on the vector space \(H^0(K_C)\) equipped with the quadratic form $q\in {\rm Sym}^2V$ defining the unique quadric $Q_C$ containing $C\subset \mathbb{P}(H^0(K_C))$. We set
\[V\coloneqq H^0(K_C)\simeq H^1(\cO_C)^\ast \quad , \quad V_2\coloneqq {\rm Sym}^2V/q\simeq H^0(K_C^{\otimes2})
\simeq H^1(T_C)^* = \Omega_{\cM_4, C},\]
and recall that by \cite[Lemma 10.19, Lemma 10.22]{Voisin_book_I},
\(\overline{\nabla}^{1,0}_0 \colon V\to V^* \otimes V_2\) corresponds by adjunction to the multiplication map

\begin{equation}\label{eq_Petriboi}
    H^0(K_C)\otimes H^0(K_C)\to H^0(K_C^{\otimes 2}).
\end{equation}
In other words, \(\overline{\nabla}^{1,0}_0 \) coincides with the map induced by multiplication \[\mu \colon V\to V^* \otimes V_2\simeq \Hom(V,V_2)
%\simeq \Hom(V,\frac{\Sym^2V}{\C \, q})
\]
\[v \mapsto (w\mapsto vw)\]
We define the space of first infinitesimal invariants to be the vector space
\begin{equation}\label{eq_where_waldone_gamma_P}
   I_1 \coloneqq
    \frac{V^* \otimes V_2}{\mu(V)} .
\end{equation}
This space only depends on the pair \((V,q)\), so that any group action on \(V\) preserving \(q\) also induces one on \(I_1\). It is worth remembering that \(q\) defines a duality isomorphism \(V\simeq V^\ast\).

%We apply the previous paragraph to \(B=\widetilde{U}\), \(X=\widetilde{\cC}^{(2/\widetilde{U})}\) and \(Z= D_{\gamma_P}\) where \(\pi \colon \widetilde{\cC}^{(2/\widetilde{U})} \to \cC^{(2)/U}\). 

Recall that \(\gamma_P \in \CH^1(\cC)\) corresponds to a divisor \(D_{\gamma_P} \in \CH^1(\cC^{(2)/U})\)
%\(Z_{\gamma_P} \in \CH^1(J)\) 
by \eqref{eq_def_D_Z}. Since the two families \(\cC , \cC^{(2)/U}\) induce the same VHS of weight 1 over \(U\), their spaces of first infinitesimal invariants \eqref{eq_where_is_waldeltaZ} identify to \(I_1\), i.e.,
\begin{equation}\label{eq_L^1_all}I_1=L^1H^1(\Omega_{\cC|C})=L^1H^1(\Omega_{\cC^{(2)/U}|C})
%=L^1H^1(\Omega_{J|J_C})
\end{equation}
and the values of the invariants
%\[\delta_1 \gamma_P (m)= \delta_1 D_{\gamma_P}(m)=\delta_1 Z_{\gamma_P}(m)\in I_1.\] 
%coincides inside the space \(I_1\)
\[\delta_1 \gamma_P \quad , \quad \delta_1 D_{\gamma_P}
% \quad , \quad \delta_1 Z_{\gamma_P}
\]
at \(m \in U\), well-defined up to signs, all coincide in \(I_1\).

\subsection{\texorpdfstring{\(I_{1}\)}{si} as a representation of \texorpdfstring{\(\rm Aut(Q)\).}{boh}}\label{sec_saving_private_cubic}

%We momentarily leave aside second invariants to prove Theorem \ref{thm_griffiths_completion}.
Let us consider the group \(G\) of automorphism of the quadric \(Q\subset \proj^3\) which preserve the two rulings
\[G= \Aut(\proj^1)\times \Aut(\proj^1)\subset \Aut(Q)\subset \PGL_4(\C).\]

\begin{lemma}\label{lem_I_1_splitting}
    Recalling that the space \(I_1\) is naturally endowed with an action of \(G\), there exists a split exact sequence of \(G\)-representations 
    % https://q.uiver.app/#q=WzAsNSxbMCwwLCIwIl0sWzEsMCwiSF4wKFRfUSgxKSkiXSxbMiwwLCJJXzEiXSxbMywwLCJIXjAoXFxjT19RKDMpKSJdLFs0LDAsIjAiXSxbMCwxXSxbMSwyXSxbMiwzXSxbMyw0XSxbMywyLCJcXHNpZ21hIiwyLHsiY3VydmUiOjMsInN0eWxlIjp7ImJvZHkiOnsibmFtZSI6ImRvdHRlZCJ9fX1dXQ==
\[\begin{tikzcd}
	0 & {H^0(T_Q(1))} & {I_1} & {H^0(\cO_Q(3))} & 0
	\arrow[from=1-1, to=1-2]
	\arrow[from=1-2, to=1-3]
	\arrow[from=1-3, to=1-4]
	\arrow["\sigma"', curve={height=18pt}, dotted, from=1-4, to=1-3]
	\arrow[from=1-4, to=1-5]
\end{tikzcd}.\]
\end{lemma}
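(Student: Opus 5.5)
The plan is to make everything explicit in terms of the two rulings. Write \(Q=\proj(A^*)\times\proj(B^*)\), with \(A,B\) two-dimensional, so that
\[V=H^0(\cO_Q(1,1))=A\otimes B .\]
I will let the double cover \(\widetilde G=SL(A)\times SL(B)\) act in place of \(G\); it acts on all the spaces below linearly, and \(G\)-equivariance follows since every map I use is natural. Restriction of quadrics gives
\[V_2=\Sym^2V/q\simeq H^0(\cO_Q(2,2))=\Sym^2A\otimes\Sym^2B .\]
The form \(q\) is \(\omega_A\otimes\omega_B\), with \(\omega_A,\omega_B\) the symplectic forms, and it identifies \(V^*\simeq V\). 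Clebsch--Gordan then gives
\[V^*\otimes V_2\simeq (A\otimes\Sym^2A)\otimes(B\otimes\Sym^2B)=(\Sym^3A\oplus A)\otimes(\Sym^3B\oplus B),\]
which has four summands \(\Sym^3A\otimes\Sym^3B\), \(\Sym^3A\otimes B\), \(A\otimes\Sym^3B\), \(A\otimes B\), each irreducible and pairwise non-isomorphic.

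The first key step is to identify \(\mu(V)\) with the summand \(A\otimes B\). The map \(\mu\) is injective, since multiplication by a nonzero section \(v\) of \(\cO_Q(1)\) is injective on sections, and it is equivariant. Its image is therefore an irreducible subrepresentation isomorphic to \(A\otimes B\). By multiplicity one it must equal the \(A\otimes B\) summand. Hence
\[I_1\simeq \Sym^3A\otimes\Sym^3B\ \oplus\ \big(\Sym^3A\otimes B\oplus A\otimes\Sym^3B\big).\]

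The second step is to identify the two pieces geometrically and to produce the maps of the sequence.
\begin{itemize}
\item The first piece is \(\Sym^3A\otimes\Sym^3B=H^0(\cO_Q(3,3))=H^0(\cO_Q(3))\).
\item Since \(T_Q=\cO_Q(2,0)\oplus\cO_Q(0,2)\), we have \(T_Q(1)=\cO_Q(3,1)\oplus\cO_Q(1,3)\), so \(H^0(T_Q(1))=\Sym^3A\otimes B\oplus A\otimes\Sym^3B\).
\end{itemize}
For the surjection \(I_1\to H^0(\cO_Q(3))\), I would take the composite of the duality \(V^*\simeq V\) induced by \(q\) with multiplication \(V\otimes H^0(\cO_Q(2))\to H^0(\cO_Q(3))\). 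This map kills \(\mu(V)\). Indeed, \(\mu(v)=\sum_i e_i^*\otimes ve_i\), and under the duality this becomes \(\sum_i e_i'\,v\,e_i=v\cdot q\), which vanishes on \(Q\). The map is nonzero, so by Schur it is, up to scalar, the projection onto the \(\Sym^3A\otimes\Sym^3B\) factor. Its kernel is then the complementary summand \(H^0(T_Q(1))\).

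To make the inclusion \(H^0(T_Q(1))\hookrightarrow I_1\) natural rather than abstract, I would realize it through the Euler sequence. Twisting \(0\to\cO_Q\to V^*\otimes\cO_Q(1)\to T_{\proj^3}|_Q\to 0\) by \(\cO_Q(1)\) and projecting onto the normal direction gives \(H^0(T_Q(1))\subset H^0(T_{\proj^3}|_Q(1))=(V^*\otimes V_2)/\mu(V)\); alternatively I would simply define it as the inclusion of the kernel. The splitting \(\sigma\) is the inclusion of the \(\Sym^3A\otimes\Sym^3B\) summand; it exists by complete reducibility of reductive groups, and is unique by multiplicity one.

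The only delicate point I expect is bookkeeping: checking that the identifications \(V_2\simeq H^0(\cO_Q(2))\) and \(V^*\simeq V\) are equivariant for \(G\) itself. This holds only up to a character, coming from the scaling of \(q\) and the determinants. I would handle it by working with \(\widetilde G\), where the determinant characters are trivial, and noting that twisting by a character does not change the decomposition into summands.
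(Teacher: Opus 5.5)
Your proof is correct, but it runs in roughly the opposite order from the paper's.

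\textbf{The paper's route.} The paper builds the sequence geometrically first. Restricting the Euler sequence to \(Q\), twisting by \(\cO_Q(1)\) and using \(H^1(\cO_Q(1))=0\) gives \(I_1\simeq H^0(T_{\proj^3|Q}(1))\). The normal sequence \(0\to T_Q(1)\to T_{\proj^3|Q}(1)\to\cO_Q(3)\to 0\), together with \(H^1(T_Q(1))=0\), then yields the short exact sequence. Only afterwards does the paper use a LiE computation to see that \(I_1\) is a multiplicity-free sum of three irreducibles. By Schur, the surjection is then the multiplication map, and any nonzero equivariant map \(H^0(\cO_Q(3))\to I_1\) is a splitting. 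The paper writes this splitting down explicitly as \(t\mapsto(\partial_i\mapsto\partial_i t)\).

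\textbf{Your route.} You decompose \(V^*\otimes V_2\) by hand with Clebsch--Gordan and pin down \(\mu(V)\) as the unique \(A\otimes B\) summand. The sequence and its splitting then come for free from complete reducibility. You identify the kernel with \(H^0(T_Q(1))=H^0(\cO_Q(3,1))\oplus H^0(\cO_Q(1,3))\) only up to isomorphism.

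\textbf{What each buys.} Your argument is more elementary and self-contained, since it replaces both the vanishing results and the computer check. By working with \(SL(A)\times SL(B)\), it also handles properly the fact that \(\Aut(\proj^1)\times\Aut(\proj^1)\) acts on \(V=H^0(\cO_Q(1,1))\) only projectively. The paper passes over this point; its LiE computation is in fact for \(SL_2\times SL_2\).

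The paper's route gives naturality and explicitness that are needed later:
\begin{itemize}
\item The description \(I_1\simeq H^0(T_{\proj^3|Q}(1))\), with \(H^0(T_Q(1))\) embedded via \(T_Q\subset T_{\proj^3|Q}\), is the form in which \(I_1\) and its \(\Aut(Q)\)-action are used in the proof of Theorem \ref{thm_griffiths_completion}.
\item The derivative formula for \(\sigma\) is exactly what is substituted into \(r_{S,2}\) in Section \ref{sec_restrict_to_lines}.
\end{itemize}
By multiplicity one, your abstract \(\sigma\) agrees with the paper's up to a scalar, so nothing is lost for the lemma itself. To use your version later, however, you would still need to check, as the paper does, that the derivative map sends \(q\cdot V\) into \(\mu(V)\) and is a nonzero equivariant map.
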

\begin{proof}
Recall that in our notation, \(\proj^3=\proj(V)\). From the Euler exact sequence 
\[0 \to \cO_{\proj^3}\to V^* \otimes \cO_{\proj^3}(1)\to T_{\proj^3} \to 0 ,\] after restricting to \(Q\) and twisting by \(\cO_Q(1)\), we get an induced long exact sequence
\[0 \to H^0(\cO_Q(1)) \to V^*\otimes H^0(\cO_Q(2)) \to H^0(T_{\proj^3|Q}(1)) \to H^1(\cO_Q(1))\simeq 0.\]
Since \(V\simeq H^0(\cO_Q(1)) \) and \(V_2=  \frac{\Sym^2V}{\C \, q}\simeq H^0(\cO_Q(2))\), the above exact sequence induces an isomorphism
\begin{equation}\label{eq_shape_I_1}
    %I_{1} \xrightarrow{\sim} H^0(T_{\proj^3|Q}(1)).
    H^0(T_{\proj^3|Q}(1)) \simeq %\frac{V \otimes V_2}{V}\simeq
    \frac{V^* \otimes V_2}{\mu(V)}=I_{1}.
\end{equation}
%where the latter equality was a definition given in \eqref{eq_where_waldone_gamma_P}.
The action of \(G\) on \(I_1\) is induced by the above identification.
By taking the long exact sequence associated to the normal sequence
\[0\to T_Q(1) \to T_{\proj^3|Q}(1) \to \cO_Q(3) \to 0\] we obtain an exact sequence
\[0\to H^0(T_Q(1)) \to H^0(T_{\proj^3|Q}(1)) \to H^0 (\cO_Q(3)) \to H^1(T_Q(1)). \]
Since \(Q\simeq \proj^1 \times \proj^1\), we have \(H^1(T_Q(1))=0\) by the Künneth formula in coherent cohomology. The arrows are all \(G\)-equivariant, so through to the identification \eqref{eq_shape_I_1} this defines the short exact sequence of \(G\)-representations we are after, \begin{equation}\label{eq_SES_I1}
    0\to H^0(T_Q(1)) \to I_1 \to H^0 (\cO_Q(3)) \to 0. \end{equation}
It turns out that \(I_1\) is the sum of three distinct irreducible \(G\)-subrepresentations we denote as X[3,3], X[1,3], X[3,1]: this can be checked either by hand or via the software LiE (\cite{LiE}), by inputting one by one the following lines of code 
\begin{center}
    setdefault A1A1 \\
    v=X[1,1] \\
    v2=X[2,2] \\
    tensor(v,v2)-v .
\end{center}
Here X[p,q] is the software's notation for the fundamental irreducible representation \(H^0(\cO_{\proj^1}(p))\otimes H^0(\cO_{\proj^1}(q))\), hence X[1,1]=\(V\) and more generally
\[\textup{X}[k,k]= H^0(\cO_Q(k))\simeq \frac{\Sym^k V}{q\cdot \Sym^{k-2} V}.\]
Since each irreducible factor appears with multiplicity one in \(I_1\), by Schur's Lemma the arrows in \eqref{eq_SES_I1} are uniquely determined up to scalar. We then consider the multiplication map
\[V\otimes V_2 \to H^0(\cO_Q(3));\]
the image of \(\mu(v) \in  V^*\otimes V_2 \simeq V\otimes V_2\) is the cubic \(q v\), which is zero in \(H^0(\cO_Q(3))\), hence the above map factors through the quotient by the subrepresentation \(\mu(V)\) into a \(G\)-equivariant
surjection
\[\pi \colon 
%\frac{V^* \otimes V_2}{\mu(V)}=
I_{1} \to H^0(\cO_Q(3)),\]
which must then be the same as the arrow in \eqref{eq_SES_I1}, up to scalar multiple. By the same representation theoretic arguments, as soon as we find a nontrivial \(G\)-equivariant map 
\[\sigma \colon H^0(\cO_Q(3)) \to I_1,\]
then this will be, up to scalar, a section of \(\pi\).
Such a map may be defined as follows. 
To any cubic \(t\in H^0(\cO_{\proj^3}(3))\) we may assign the linear map \(\sigma(t)\in \Hom(V^*,V_2)\simeq \Hom(V,V_2)\) sending the partial derivation \(\partial_i=\frac{\partial}{\partial x_i}\) to \(\partial_i t\). Since \(\partial_i(aq)\equiv a\partial_i q\) in \(V_2\), it is then clear that the subspace \(V\cdot q \subset H^0(\cO_Q(3))\) is mapped to \(\mu(V) \subset \Hom(V,V_2)\), which implies that \(t \mapsto \sigma(t)\) factors to the quotients by \(V \cdot q\) and yields the map \(\sigma\) we are after.
%\begin{equation}\label{eq_section}
  % \sigma(t) \colon \ V^* \ni \varphi \mapsto \varphi(t) \in V_2. 
%\end{equation}
We denote \(I_1^3\coloneqq \im \sigma\). This will be the subrepresentation appearing in Theorem \ref{thm_griffiths_completion}.
\end{proof}

\subsection{Recovering the cubic from \texorpdfstring{\(\delta\gamma_P\)}{massi}}Let \(C\) be a non-hyperelliptic genus \(4\) curve and let us denote by \(\cC \to B\) a Kuranishi family, so that \(C\) is the fiber above \(0 \in B\). Let \(S\) be an element of \(|\cI_C(3)|\), which parametrizes cubics passing through \(C\hookrightarrow \proj^3\). Let \(B'\subset|\cO_{S}(2)|_{sm}\) denote a small open set around the point \([C]=[S\cap Q_C]\) in the space of smooth quadric sections of \(S\). Let % https://q.uiver.app/#q=WzAsMyxbMCwwLCJcXGNRIl0sWzEsMCwiXFxwcm9qXjNcXHRpbWVzICB8XFxjT197XFxwcm9qXjN9KDIpfF97c219Il0sWzAsMSwifFxcY09fe1xccHJval4zfSgyKXxfe3NtfSJdLFswLDFdLFsxLDJdLFswLDJdXQ==
\[\begin{tikzcd}
	\cC' & {S\times  B'} \\
	{B'}
	\arrow[hook, from=1-1, to=1-2]
	\arrow[from=1-1, to=2-1]
	\arrow[from=1-2, to=2-1]
\end{tikzcd}\]
%be the associated universal quadric bundle, which is a complete family locally at \([Q_C]\). Set \(\cC'\coloneqq \cQ\cap S\times B\); 
be the universal family; \(\cC'\to B'\) is a smooth family of genus \(4\) curves, and by the universality of the Kuranishi family we have a map 
\[\phi_S \colon B' \to B\]
along which the family \(\cC \to B\) pulls back to \(\cC' \to B'\). As $|\mathcal{O}_S(2)|=|\mathcal{O}_{\proj^3}(2)|$, we also have the universal family of quadrics \begin{equation}\label{eq_quadric_bundle}
    \mathcal{Q}\subset B\times \mathbb{P}^3
\end{equation} such that $\mathcal{C}_b=\mathcal{Q}_b\cap S \subset \proj^3 $ for any $b\in B$.
The next Lemma implies that, when \(C\) and \(S\) are general, \(\phi_S\) is an isomorphism around \([C] \in B'\subset |\cO_{S}(2)|_{sm}\).

%\begin{comment}
    \begin{lemma}\label{lem_univ_family_fixed_cubic}
    %For \(S\in |\cI_C(3)|\) general, the differential at \([C]\in B'\)
    %For a general quadric \(Q \in |H^0(\cO_{\proj^3}(2))|\) and a general cubic \(S\in |H^0(\cO_{\proj^3}(3))|\), if we denote \(C=Q\cap S\), then
    For the general curve \([C] \in \cM_4\) and the general cubic \([S]\in |\cI_C(3)|\), 
    \[d\phi_S \colon H^0(\cO_{C}(2)) \to H^1(T_C)\]
    is an isomorphism.
\end{lemma}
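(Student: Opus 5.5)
Both source and target of \(d\phi_S\) have dimension \(9\): \(h^0(\cO_C(2)) = h^0(K_C^{\otimes 2}) = 3g-3 = 9 = h^1(T_C)\). So it suffices to prove that \(d\phi_S\) is injective, or equivalently that the dual map is surjective. The plan is to identify \(d\phi_S\) with a coboundary map, interpret its kernel geometrically, and then show the kernel vanishes for general \((C,S)\).

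\emph{Identifying the differential.} The tangent space to \(|\cO_S(2)|\) at \([C]\) is \(H^0(\cO_S(2))/\C\cdot q\). Restriction to \(C\) gives a map to \(H^0(N_{C/S}) = H^0(\cO_C(2))\), and on this space \(d\phi_S\) is the coboundary
\[
H^0(N_{C/S}) \to H^1(T_C)
\]
of the normal sequence \(0\to T_C\to T_{S|C}\to N_{C/S}\to 0\). Hence
\[
\Ker d\phi_S = \im\big(H^0(T_{S|C})\to H^0(N_{C/S})\big),
\]
and the claim reduces to proving \(H^0(T_{S|C})=0\) for general \(C\) and general \(S\supset C\).

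\emph{Vanishing of \(H^0(T_{S|C})\).} Assume \(S\) is a smooth cubic surface, which holds for general \(S\in|\cI_C(3)|\) since the base locus of this system is \(C\) and a Bertini argument applies. First, \(H^0(T_S)=0\), because a smooth cubic surface has finite automorphism group. Next, tensor the restriction sequence
\[
0\to \cO_S(-2)\to \cO_S\to \cO_C\to 0
\]
with \(T_S\) (here \(C\in|\cO_S(2)|\)) to obtain
\[
0\to T_S(-2)\to T_S\to T_{S|C}\to 0.
\]
It then suffices to show \(H^1(T_S(-2))=0\). By Serre duality, using \(K_S=\cO_S(-1)\),
\[
H^1(T_S(-2)) \simeq H^1(\Omega_S(1))^*.
\]
To show \(H^1(\Omega_S(1))=0\), I would use the conormal sequence
\[
0\to \cO_S(-2)\to \Omega_{\proj^3|S}(1)\to\Omega_S(1)\to 0
\]
together with the restricted Euler sequence
\[
0\to\Omega_{\proj^3|S}(1)\to V^*\otimes\cO_S\to\cO_S(1)\to0.
\]
From the Euler sequence, \(H^1(\Omega_{\proj^3|S}(1))\) is the cokernel of \(V^*\to H^0(\cO_S(1))\), which is zero since this map is an isomorphism; moreover \(H^1(\cO_S)=0\). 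Then \(H^2(\cO_S(-2))\simeq H^0(\cO_S(1))^*\ne 0\), so the conormal sequence gives
\[
H^1(\Omega_{\proj^3|S}(1))=0\to H^1(\Omega_S(1))\to H^2(\cO_S(-2))\to H^2(\Omega_{\proj^3|S}(1)).
\]
Exactness therefore reduces the problem to injectivity of the last map \(H^2(\cO_S(-2))\to H^2(\Omega_{\proj^3|S}(1))\).

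\emph{Main obstacle and fallback.} The injectivity of \(H^2(\cO_S(-2))\to H^2(\Omega_{\proj^3|S}(1))\) is the delicate step. Dually, it amounts to surjectivity of the Jacobian-type map \(H^0(T_{\proj^3|S}(-2)) \to H^0(\cO_S(1))\), given by \(\partial\mapsto\partial s\), and this is a place where I expect the naive twists may not work out. If the bookkeeping fails, I would argue with an explicit example instead. Since \(d\phi_S\) is a map between vector bundles of equal rank over the irreducible parameter space of pairs \((C,S)\), being an isomorphism is an open condition, so it suffices to exhibit one pair \((Q\cap S, S)\), e.g. a Fermat-type cubic with a suitable quadric, for which \(\Ker d\phi_S=0\). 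That can be checked using the identification of the kernel with the image of \(H^0(T_{S|C})\).
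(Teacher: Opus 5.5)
You have a genuine gap: the main route fails, and the fallback omits the criterion that actually makes the example checkable.

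\textbf{The main route fails.} Your opening reduction matches the paper's: equal dimensions, injectivity, the normal sequence, hence it suffices that \(H^0(T_{S|C})=0\). The step you flagged then fails outright, not for bookkeeping reasons: \(H^1(T_S(-2))\) is never zero for a smooth cubic surface. In the restricted Euler sequence, \(H^2(\Omega_{\proj^3|S}(1))\) sits between \(H^1(\cO_S(1))=0\) and \(V\otimes H^2(\cO_S)=0\). So the map you need to be injective has target \(0\), and your conormal sequence gives \(H^1(\Omega_S(1))\simeq H^2(\cO_S(-2))\simeq H^0(\cO_S(1))^*\simeq\C^4\). Dually, the source \(H^0(T_{\proj^3|S}(-2))\) of your Jacobian-type map is \(0\), so it cannot surject onto \(H^0(\cO_S(1))\).

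With \(H^0(T_S)=0\), your sequence therefore only gives \(H^0(T_{S|C})\simeq\Ker\big(H^1(T_S(-2))\to H^1(T_S)\big)\). This is a map between two \(4\)-dimensional spaces, given by multiplication by the equation \(q\) of \(C\) on \(S\). Through the coboundaries of \(0\to T_S\to T_{\proj^3|S}\to\cO_S(3)\to 0\) and its twist by \(\cO_S(-2)\), it becomes \(R^1_S\xrightarrow{\cdot q}R^3_S\) in the Jacobian ring of \(s\). So the vanishing is a joint condition on \((q,s)\), not a vanishing theorem on \(S\) alone. That is why the lemma is only claimed for general pairs.

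\textbf{The fallback is not carried out.} Openness plus one example is the right global strategy, and it is what the paper does. As written, though, it is not a proof: saying the example ``can be checked'' via \(H^0(T_{S|C})\) restates the goal, and you give no way to compute that space. The paper gets a computable criterion from the normal sequence of \(S\subset\proj^3\) restricted to \(C\).
\begin{itemize}
\item \(H^0(T_{S|C})\) is the kernel of \(H^0(T_{\proj^3|C})=(V\otimes V^*)/\C e\to H^0(\cO_C(3))\), \([X_i\partial_j]\mapsto(X_i\partial_j s)_{|C}\).
\item This map factors through \(J^3_S/\C s\), isomorphically for smooth \(S\).
\item The cubics vanishing on \(C\) are \(qV\oplus\C s\). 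Hence the kernel is zero iff \(qV\cap J^3_S=0\), i.e. iff \(R^1_S\xrightarrow{\cdot q}R^3_S\) is injective, which is clearly open in \((q,s)\).
\end{itemize}
For the example, take the Fermat cubic, so \(J_S=(X_0^2,\dots,X_3^2)\) and \(R_S\) has a basis of squarefree monomials. With \(q=X_0X_1+X_2X_3\), the classes of the \(X_iq\) are the four distinct squarefree cubic monomials, and this \(Q\cap S\) is smooth. Mind the sign: with \(q=X_0X_1-X_2X_3\), the intersection with the Fermat cubic is singular, for instance at \([1:1:-1:-1]\).
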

\begin{proof}
By Serre duality, the two spaces have the same dimension, hence it suffices to prove injectivity of \(d\phi_S\). By the long exact sequence associated to the normal exact sequence
\[0 \to T_C \to T_{S|C} \to \cO_C(2) \to 0,\]
it suffices to show that \(H^0(T_{S|C})=0\).
From the normal sequence for \(S\subset \proj^3\) restricted to \(C\), we get the exact sequence
\[0\to H^0(T_{S|C}) \to H^0(T_{\proj^3|C}) \xrightarrow{\beta} H^0(\cO_C(3))\]
is exact. If \(S\) is cut out by the cubic \(s\in H^0(\cO_{\proj^3}(3))\), then the map \(\beta\) can be explicitly written as
\begin{equation}\label{eq_derive_s_1}
\left[X_i \frac{\partial}{\partial X_j}\right]\to \left(X_i \frac{\partial s}{\partial X_j}\right)_{|C}, \end{equation}
where \(\left[X_i \frac{\partial}{\partial X_j}\right]\) is the equivalence class of \(X_i \frac{\partial}{\partial X_j} \in H^0(\cO_{\proj^3}(1))\otimes V^*\) in 
\[\frac{H^0(\cO_{\proj^3}(1))\otimes V^*}{e_{\proj^3} \cdot \C}\simeq H^0(T_{\proj^3|C}).\]
Here \(e_{\proj^3} = \sum_i X_i \frac{\partial}{\partial X_i}\) denotes the Euler field, which is sent to zero by the map \eqref{eq_derive_s_1}, by the Euler identity and the fact that \(s\) vanishes on \(C\).

Let now \(J_S\subset \C[X_0, \dots , X_3]\) denote the Jacobian ideal, which by definition is generated by the partial derivatives \(\frac{\partial s}{\partial X_i}\), and consider the Jacobian ring
\[R_S\coloneqq \frac{\C[X_0, \dots , X_3]}{J_S}.\]
We denote their graded pieces of weight \(k\) by \(J^k_S\) and \(R^k_S\) respectively.
Then the map \eqref{eq_derive_s_1} factors through the isomorphism
\[H^0(T_{\proj^3|C}) \xrightarrow{\sim}  \frac{J_S^3}{s \cdot \C} \]
\[ \left[X_i \frac{\partial}{\partial X_j}\right] \mapsto X_i \frac{\partial s}{\partial X_j}\]
followed by the obvious restriction map 
\[ r_C \colon \frac{J_S^3}{s \cdot \C} \to H^0(\cO_C(3)).\] The injectivity of \(r_C\) is equivalent to the injectivity of the multiplication map 
\begin{equation}\label{eq_mult_Jacobian_ring}
    R_S^1 \xrightarrow{\cdot q} R_S^3.
\end{equation}
%Set \(q_F\coloneqq X_0 X_1 +X_2X_3\) and denote by \(O\subset|\cO_{\proj^3}(3)| \) the open set parametrizing those cubics that intersect \(Q_F\coloneqq \{q_F=0\}\) smoothly. 
Consider the variety
\[F\coloneqq \{(Q,S)\in |\cO_{\proj^3}(2)| \times |\cO_{\proj^3}(3)| ; Q\cap S \textup{ is smooth}\}.\]
%It admits an obvious map onto
%\[F\coloneqq \{(Q,S)\in \cM_4 \times |\cO_{\proj^3}(3)| ; C\subset S\},\]
%which is bijective by the considerations of the introduction, hence an isomorphism by Zariski's main theorem. 
%Let \(\tau\colon F \to |\cO_{\proj^3}(3)|\) be the natural projection,. 
The subset
\[W\coloneqq \{(Q,S)\in F ; \ R^1_S\xrightarrow{\cdot q} R^3_S \textup{ is injective}\}\]
is open in \(F\), since the map \eqref{eq_mult_Jacobian_ring} will be injective precisely when the image
\[R_S^1=\C[X_0, \dots , X_3]^1 \xrightarrow{\cdot q} \C[X_0, \dots , X_3]^3\]
is in direct sum with \(J^3_S\). It is immediate to check that the Segre quadric and the Fermat cubic define a pair which belongs to \(W\), hence \(W\) is nonempty,
%.Note that \(F\) admits an obvious map onto
%\[F_0\coloneqq \{(Q,S)\in \cM_4 \times |\cO_{\proj^3}(3)| ; C\subset S\},\]
%which is bijective by the considerations of the introduction, hence open by Zariski's main theorem. Any pair \((C,S)\) in the image of \(W\) satisfies the claim, 
which concludes the proof.

\end{proof}

%\end{comment}
We can now prove our first main theorem:

%The class \(\delta \gamma_P \in I_{1}\), which was defined only up to sign, lives in the subrepresentation \(H^0(\cO_Q(3))\).
%\end{thm}
\begin{proof}[Proof of Theorem \ref{thm_griffiths_completion}]
Let \(\cC\to B\) and \(S\) be as above. By Lemma \ref{lem_univ_family_fixed_cubic}, if \(\cQ\) is the universal quadric above \(B\) of \eqref{eq_quadric_bundle}, then up to shrinking \(B\) we have that 
\[\cC=\cQ\cap S\times B\subset \proj^3\times B.\]
The three spaces $H^1(\Omega_{\mathcal{Q}\mid Q})$, 
$H^1(\Omega_{\mathcal{Q}\mid C})$ and $H^1(\Omega_{\mathcal{C}\mid C})$ 
map by restriction to $H^1(\Omega_C)$. We denote by $H^1(\Omega_{\mathcal{Q}\mid Q})^0$ , resp.  
$H^1(\Omega_{\mathcal{Q}\mid C})^0$,  $H^1(\Omega_{\mathcal{C}\mid C})^0$  the kernels of these restrictions maps. We will denote by $\overline{H^1(T_{\mathcal{Q}\mid Q  }(-2))}, \overline{H^0(T_{\mathcal{Q}\mid C  }(1))}, \overline{H^0(T_{\mathcal{C}\mid C  }(1))}$   their respective  Serre duals.

It is easy to check that the restriction morphism  $H^1(\Omega_{\mathcal{Q}\mid Q})\rightarrow H^1(\Omega_Q)$ is an isomorphism. It follows that $H^1(\Omega_{\mathcal{Q}\mid Q})^0$ is one-dimensional, generated by the class $c_1(\mathcal{O}_{\mathcal{Q}}(1,-1))$.

The composition \[ H^1(\Omega_{\cQ|Q})^0 \to H^1(\Omega_{\cQ|C})^0 \to H^1(\Omega_{\cC|C})\]

thus has for image the space spanned by \(\delta \gamma_P \), by definition of the latter (see Section \ref{subsec_inf_invariants}). We will thus denote by $\delta_{\gamma_P}$ the composite map as well.  We consider the dual \((\delta \gamma_P )^*\) of \(\delta \gamma_P\), which fits in the following diagram

% https://q.uiver.app/#q=WzAsNyxbMiwxLCJIXjAoVF97XFxjUXxRfSgxKSkiXSxbMSwyLCJcXG92ZXJsaW5le0heMChUX3tcXG1hdGhjYWx7Q31cXG1pZCBDICB9KDEpKX0iXSxbMSwxLCJcXG92ZXJsaW5le0heMChUX3tcXG1hdGhjYWx7UX1cXG1pZCBDICB9KDEpKX0iXSxbMSwwLCJIXjAoXFxjT19DKDQpKSJdLFsxLDMsIjAiXSxbMCwxLCJcXG92ZXJsaW5le0heMShUX3tcXGNRfFF9KC0yKSl9Il0sWzMsMSwiMCJdLFswLDIsInIiXSxbMiwzXSxbMSwyXSxbNCwxXSxbMiw1XSxbMSw1LCIoXFxkZWx0YVxcZ2FtbWFfUCleKiJdLFs2LDBdLFswLDMsIlxcYWxwaGEiLDJdXQ==
\[\begin{tikzcd}
	& {H^0(\cO_C(4))} && \\
	{\overline{H^1(T_{\cQ|Q}(-2))}} & {\overline{H^0(T_{\mathcal{Q}\mid C  }(1))}} & {H^0(T_{\cQ|Q}(1))} & 0 \\
	& {\overline{H^0(T_{\mathcal{C}\mid C  }(1))}} \\
	& 0
	\arrow[from=2-2, to=1-2]
	\arrow[from=2-2, to=2-1]
	\arrow["\alpha"', from=2-3, to=1-2]
	\arrow["r", from=2-3, to=2-2]
	\arrow[from=2-4, to=2-3]
	\arrow["{(\delta\gamma_P)^*}", from=3-2, to=2-1]
	\arrow[from=3-2, to=2-2]
	\arrow[from=4-2, to=3-2]
\end{tikzcd}.\]
The column in this diagram is induced by the normal exact sequence. The row in this diagram is obtained using the fact that $\mathcal{O}_Q(-C)=\mathcal{O}_Q(-3)$.
We now observe that the universal quadric $\mathcal{Q}$ maps to $\mathbb{P}^3$ via the projection on the first factor in (\ref{eq_quadric_bundle}). We thus have a nontrivial natural map $H^0(T_{\mathcal{Q}/\proj^3}(1)) \stackrel{v}{\rightarrow }H^0(T_{\mathcal{Q}\mid Q}(1))$ 
%of vertical tangent vectors
.
We now have the following 
\begin{lemma}\label{lem_truc} The composition $\alpha\circ  v$ is identically $0$. The composite $r\circ v$ thus factors through a  map $\psi : H^0(T_{\mathcal{Q}/\proj^3}(1))\rightarrow \overline{H^0(T_{\mathcal{C}\mid C}(1))}$. 
The linear map $\psi$ is $\Aut(Q)$-equivariant.  
\end{lemma}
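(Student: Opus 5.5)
We need to understand the maps first. $\mathcal{Q}\subset B\times\proj^3$ is the universal quadric, $Q=\mathcal{Q}_0$, and $C=Q\cap S$. The map $\alpha\colon H^0(T_{\mathcal{Q}|Q}(1))\to H^0(\mathcal{O}_C(4))$ is the composite of restriction to $C$ with the map $\overline{H^0(T_{\mathcal{Q}|C}(1))}\to H^0(\mathcal{O}_C(4))$. The latter comes from the normal sequence of $\mathcal{C}=\mathcal{Q}\cap(S\times B)$ inside $\mathcal{Q}$, whose normal bundle is $\mathcal{O}(3)$ pulled back from $S\subset\proj^3$. Concretely, $\alpha$ sends a section $\xi$ of $T_{\mathcal{Q}}(1)$ along $Q$ to $(\xi\cdot s)|_C\in H^0(\mathcal{O}_C(4))$, where $s$ is the cubic equation of $S$. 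Here $\xi\cdot s$ means the derivative of the pullback $\mathrm{pr}_2^*s$ along $\xi$. This is well defined on $C$ because $s$ vanishes there.

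\textbf{Step 1: $\alpha\circ v=0$.} I would show that $\mathrm{pr}_2^*s$ is constant along vertical vectors. A section of $T_{\mathcal{Q}/\proj^3}(1)$ is tangent to the fibres of the projection $\mathcal{Q}\to\proj^3$. The function $\mathrm{pr}_2^*s$ is pulled back from $\proj^3$, so its derivative along any vertical vector vanishes identically, not only on $C$. Hence $\alpha(v(\xi))=0$ for every $\xi$.

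\textbf{Step 2: factorization.} The column is exact:
\[0\to \overline{H^0(T_{\mathcal{C}|C}(1))}\to \overline{H^0(T_{\mathcal{Q}|C}(1))}\to H^0(\mathcal{O}_C(4)).\]
By construction, $\alpha$ is the composite of $r$ with the last arrow. Step 1 therefore says $r\circ v$ lands in the kernel of $\overline{H^0(T_{\mathcal{Q}|C}(1))}\to H^0(\mathcal{O}_C(4))$, which is the image of the injective map from $\overline{H^0(T_{\mathcal{C}|C}(1))}$. So $r\circ v$ factors uniquely as $\psi$ followed by the inclusion.

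\textbf{Step 3: equivariance.} The point is that $H^0(T_{\mathcal{Q}/\proj^3}(1))$, restricted along $Q$, depends only on the family of quadrics near $Q$, and $\Aut(Q)\subset\PGL_4$ acts on the universal quadric over $|\mathcal{O}_{\proj^3}(2)|$ fixing the point $[Q]$. This action preserves the fibration $\mathcal{Q}\to\proj^3$, so $v$ is equivariant. I would then identify the target $\overline{H^0(T_{\mathcal{C}|C}(1))}$, via Serre duality, with the dual of $H^1(\Omega_{\mathcal{C}|C})^0$, which is $I_1^*$ by \eqref{eq_L^1_all}. Through \eqref{eq_shape_I_1}, $I_1$ carries an action depending only on $(V,q)$, and this is the action meant on the target.

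The main obstacle is that the target and $\psi$ are defined through $S$, which $\Aut(Q)$ does not preserve. I would handle it by checking that $\psi$ agrees, after these identifications, with a map built purely from $q$. Concretely, $H^0(T_{\mathcal{Q}/\proj^3}(1))$ along $Q$ amounts to first-order deformations of the quadric equation modulo $q$, that is $V_2$-type data tensored with $V$, and the Serre-dual map should be the natural map $V\otimes V_2\to I_1$ (or its dual). Under $|\mathcal{O}_S(2)|=|\mathcal{O}_{\proj^3}(2)|$, the identification of $H^1(T_C)$ with $H^0(\mathcal{O}_C(2))$ from Lemma~\ref{lem_univ_family_fixed_cubic} uses $S$, but only through the restriction of quadrics to $C$. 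That restriction $\Sym^2V/q\to H^0(K_C^2)$ is intrinsic and independent of $S$, so the resulting formula is independent of $S$, and equivariance follows.
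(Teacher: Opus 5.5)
Your Steps 1 and 2 are correct and match the paper. The top vertical arrow is $ds$, which kills vectors tangent to the fibres of $\cQ\to\proj^3$, and the factorization is exactness of the column. Letting $\Aut(Q)\subset\PGL_4$ act on the universal quadric over $|\cO_{\proj^3}(2)|$ is also a cleaner way than the paper's to make $v$ equivariant.

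\textbf{The gap is in Step 3.} It sits exactly at the obstacle you flagged. You claim that $d\phi_S\colon H^0(\cO_C(2))\to H^1(T_C)$ depends on $S$ only through the restriction of quadrics to $C$; this is false. The intrinsic isomorphism $\Sym^2V/q\simeq H^0(K_C^{\otimes 2})$ identifies $V_2$ with $H^1(T_C)^*$ (Serre duality), not with $H^1(T_C)$. The map $d\phi_S$ is the coboundary of $0\to T_C\to T_{S|C}\to\cO_C(2)\to 0$, and it moves with $S$. Replace $s$ by $s+lq$ with $l\in V$. To first order $\{q+\epsilon p=0,\ s+lq=0\}=\{q+\epsilon p=0,\ s-\epsilon lp=0\}$, so up to sign $d\phi_{S'}(p)=d\phi_S(p)-\delta_Q(lp)$. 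Here $\delta_Q\colon H^0(\cO_C(3))\to H^1(T_C)$ is the coboundary for $C\subset Q$. Its kernel, the image of $H^0(T_Q|_C)$, is only $6$-dimensional, so $\delta_Q(l\,\cdot)$ is nonzero on $V_2$. Hence your argument yields no $S$-independent formula for $\psi$.

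\textbf{The guessed formula for $\psi$ is also wrong, and the step cannot be repaired.} Embed $\overline{H^0(T_{\cC|C}(1))}=H^0(T_{\cC|C}(1))/H^0(T_C(1))$ into $T_0B\otimes V=H^1(T_C)\otimes V$ via the projection $T_{\cC|C}\to T_0B\otimes\cO_C$. A vertical field $x=\sum_j p_j\otimes l_j\in\Ker\big(V_2\otimes V\to H^0(\cO_Q(3))\big)$ is then sent to $(d\phi_S\otimes\id)(x)$. So $\psi$ is injective on the $20$-dimensional source. In particular $\psi$ does not kill the summand $V\simeq\textup{X}[1,1]$, which is $\mu(V)$ transported by $q$. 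The natural map $V\otimes V_2\to I_1$ does kill that summand, so the two cannot agree. Moreover, $I_1\simeq\textup{X}[3,3]\oplus\textup{X}[1,3]\oplus\textup{X}[3,1]$ contains no $\textup{X}[1,1]$. An injective $\psi$ therefore cannot be $G$-equivariant for the natural actions on source and target, whatever identifications you choose.

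The paper's argument compares the families cut out by $t$ and by $g^*t$ and uses functoriality of the spectral sequence. That only relates $\psi_t$ to $\psi_{g^*t}$ by transport of structure. It tacitly assumes the same independence from the cubic that you tried to prove, and the computation above shows this assumption fails.
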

\begin{proof} 
First of all, we point out that \(H^0(T_{\cQ/\proj^3|Q}(1))\) has a \(\Aut(Q)\)-action defined by the fact that any element \(g\in \Aut(Q)\) extends uniquely to an automorphism \(\Aut(\cQ/B)\) (since \(Q\) is rigid) and the latter group acts on the space in question.
The existence of the map \(\psi\) follows from the fact the top vertical map in the diagram is none other than the differential \(dt \) where \(t\) is the equation of \(S\hookrightarrow \proj^3\), which vanishes on all vertical vector fields \(\eta \in H^0(T_{\cQ/\proj^3|Q}(1))\).
The fact that it is \(\Aut(Q)\)-equivariant follows essentially from the fact that the isomorphism \eqref{eq_shape_I_1} defining the \(\Aut(Q)\)-action on \(I_{1}^*=H^0(T_{\cC|C}(1))^0\) is functorial under the action of automorphisms \(g\in \Aut(Q)= \Aut(\cQ/B)\). Indeed, take such an automorphism and let \(\cC''\subset \cQ\) be the family of curves defined by \(g^*t\), the pullback of the equation cutting out \(\cC\) inside \(\cQ\) over \(B\). Then \(g\colon \cC'' \rightarrow \cC\) induces a natural map
\[g_* \colon H^0(T_{\cC|C}(1))^0 \to H^0(T_{\cC''|C''}(1))^0\]
which fits into the diagram

% https://q.uiver.app/#q=WzAsNCxbMSwxLCJIXjAoVF97XFxjUS9cXHByb2peM3xRfSgxKSkiXSxbMSwwLCJIXjAoVF97XFxjUS9cXHByb2peM3xRfSgxKSkiXSxbMCwxLCJIXjAoVF97XFxjQycnfEMnJ30oMSkpXjAiXSxbMCwwLCJIXjAoVF97XFxjQ3xDfSgxKSleMCJdLFswLDEsImdfKiJdLFsxLDMsInJcXGNpcmMgXFx1cHNpbG9uIiwyXSxbMiwzLCJnXyoiLDJdLFswLDIsInJcXGNpcmMgXFx1cHNpbG9uIiwyXV0=
\[\begin{tikzcd}
	{H^0(T_{\cC|C}(1))^0} & {H^0(T_{\cQ/\proj^3|Q}(1))} \\
	{H^0(T_{\cC''|C''}(1))^0} & {H^0(T_{\cQ/\proj^3|Q}(1))}
	\arrow["{r\circ \upsilon}"', from=1-2, to=1-1]
	\arrow["{g_*}"', from=2-1, to=1-1]
	\arrow["{g_*}", from=2-2, to=1-2]
	\arrow["{r\circ \upsilon}"', from=2-2, to=2-1]
\end{tikzcd}\]

To show \(\Aut(Q)\)-equivariance of \(\psi\), we need only to show that the left vertical map is compatible with the natural action of \(g\) on \(I_{1}^*\) via the isomorphism \eqref{eq_where_is_waldeltaZ}. But this is clear since the isomorphism is induced by a spectral sequence which is functorial with respect to maps that preserve the filtration on \(H^1(\Omega_{\cC|C})\). The dual of \(g_*\) is such a map, since it is just the pullback map on Dolbeault cohomology classes.\end{proof}
Thanks to Lemma \ref{lem_truc}
the diagram above completes as follows
% https://q.uiver.app/#q=WzAsOCxbMiwxLCJIXjAoVF97XFxjUXxRfSgxKSkiXSxbMSwyLCJcXG92ZXJsaW5le0heMChUX3tcXG1hdGhjYWx7Q31cXG1pZCBDICB9KDEpKX0iXSxbMSwxLCJcXG92ZXJsaW5le0heMChUX3tcXG1hdGhjYWx7UX1cXG1pZCBDICB9KDEpKX0iXSxbMSwwLCJIXjAoXFxjT19DKDQpKSJdLFsxLDMsIjAiXSxbMCwxLCJcXG92ZXJsaW5le0heMShUX3tcXGNRfFF9KC0yKSl9Il0sWzMsMSwiMCJdLFsyLDIsIkheMChUX3tcXGNRL1xccHJval4zfFF9KDEpKSJdLFswLDIsInIiXSxbMiwzXSxbMSwyXSxbNCwxXSxbMiw1XSxbMSw1LCIoXFxkZWx0YVxcZ2FtbWFfUCleKiJdLFs2LDBdLFswLDMsIlxcYWxwaGEiLDJdLFs3LDAsInYiXSxbNywxLCJcXHBzaSJdXQ==
\[\begin{tikzcd}
	& {H^0(\cO_C(4))} && \\
	{\overline{H^1(T_{\cQ|Q}(-2))}} & {\overline{H^0(T_{\mathcal{Q}\mid C  }(1))}} & {H^0(T_{\cQ|Q}(1))} & 0 \\
	& {\overline{H^0(T_{\mathcal{C}\mid C  }(1))}} & {H^0(T_{\cQ/\proj^3|Q}(1))} \\
	& 0
	\arrow[from=2-2, to=1-2]
	\arrow[from=2-2, to=2-1]
	\arrow["\alpha"', from=2-3, to=1-2]
	\arrow["r", from=2-3, to=2-2]
	\arrow[from=2-4, to=2-3]
	\arrow["{(\delta\gamma_P)^*}", from=3-2, to=2-1]
	\arrow[from=3-2, to=2-2]
	\arrow["v", from=3-3, to=2-3]
	\arrow["\psi", from=3-3, to=3-2]
	\arrow[from=4-2, to=3-2]
\end{tikzcd}\]

Recall from Section \ref{sec_saving_private_cubic} the notation \(G\coloneqq \Aut(\proj^1)\times \Aut(\proj^)\subset \Aut(Q)\). Next we prove 

\begin{lemma}\label{lem_G_splitting}
    We have an isomorphism of \(G\)-representations
    \[H^0(T_{\cQ/\proj^3|Q}(1))\simeq H^0(T_Q(1))\oplus V.\]
\end{lemma}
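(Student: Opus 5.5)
The plan is to identify the restricted relative tangent sheaf \(T_{\cQ/\proj^3|Q}\) with an explicit kernel bundle on \(Q\). We then compute its twisted global sections as the kernel of a multiplication map and decompose that kernel with the same representation theory as in Lemma \ref{lem_I_1_splitting}.

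First, recall that \(B\) is an open subset of \(|\cO_{\proj^3}(2)|=\proj(\Sym^2 V)\), so \(T_{B,[Q]}\simeq \Sym^2V/\C q = V_2\). The projection \(\cQ\to\proj^3\) is smooth. Its fiber over \(x\) is the hyperplane (intersected with \(B\)) of quadrics through \(x\). Hence at a point \(x\in Q\) the relative tangent space \(T_{\cQ/\proj^3,(Q,x)}\) is the space of first-order deformations of \(q\) that still vanish at \(x\), namely the kernel of evaluation \(V_2\to \cO_Q(2)_x\), \(g\mapsto g(x)\). Evaluation is well defined since \(q\) vanishes on \(Q\), and it is surjective at each point. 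Globalizing, I expect an exact sequence of sheaves on \(Q\)
\[0\to T_{\cQ/\proj^3|Q}\to V_2\otimes \cO_Q\to \cO_Q(2)\to 0 .\]
I would derive this rigorously from the defining equation of \(\cQ\subset B\times\proj^3\): the relative differential over \(\proj^3\) of the universal equation, restricted to \(Q\), gives the right-hand map. This sequence is equivariant for \(\Aut(Q)\), acting on \(V_2\) through \(\Sym^2\) of its action on \(V\). To make this honest one works with the linear group \(SL_2\times SL_2\) lifting \(G\), which is harmless for the decomposition.

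Next, twist by \(\cO_Q(1)\) and take cohomology. Using \(H^0(\cO_Q(1))=V\) and \(H^1(\cO_Q(1))=0\), which holds by Künneth, we get
\[H^0(T_{\cQ/\proj^3|Q}(1))\simeq \Ker\big(V\otimes V_2\xrightarrow{\ \mathrm{mult}\ } H^0(\cO_Q(3))\big).\]
The multiplication map is surjective, since \(H^0(\cO_Q(3))\) is a quotient of \(\Sym^3V\). A dimension check gives \(4\cdot 9-16=20\).

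Finally, we decompose. In the notation of Lemma \ref{lem_I_1_splitting}, \(V\otimes V_2=\textup{X}[1,1]\otimes\textup{X}[2,2]\). The LiE computation recorded there (or Clebsch–Gordan in each factor) gives
\[V\otimes V_2 = \textup{X}[3,3]\oplus\textup{X}[1,3]\oplus\textup{X}[3,1]\oplus\textup{X}[1,1],\]
with each factor appearing with multiplicity one. The equivariant surjection onto \(H^0(\cO_Q(3))=\textup{X}[3,3]\) must therefore kill exactly the other three summands, so the kernel is \(\textup{X}[1,3]\oplus\textup{X}[3,1]\oplus\textup{X}[1,1]\). On the other side, \(T_Q=\cO(2,0)\oplus\cO(0,2)\). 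Hence \(H^0(T_Q(1))=H^0(\cO(3,1))\oplus H^0(\cO(1,3))=\textup{X}[3,1]\oplus\textup{X}[1,3]\), while \(\textup{X}[1,1]=V\). This gives the claimed isomorphism of \(G\)-representations, with dimensions \(8+8+4=20\).

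The main obstacle is the first step. It requires a careful identification of \(T_{\cQ/\proj^3|Q}\) with the kernel of evaluation, compatibly with the \(G\)-action that was defined in Lemma \ref{lem_truc} via the unique extension of automorphisms of \(Q\) to \(\Aut(\cQ/B)\). I would check this by writing the universal equation \(\sum_I b_I x^I\) explicitly. The vertical vector fields of \(\cQ\to\proj^3\) are then the \(\partial/\partial b_I\)-directions tangent to \(\cQ\), and on these \(G\) visibly acts through its linear action on the coefficients.
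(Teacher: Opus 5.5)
Your proposal is correct and follows the paper's own proof. The paper likewise starts from the sequence \(0\to T_{\cQ/\proj^3|Q}\to H^0(\cO_Q(2))\otimes\cO_Q\to\cO_Q(2)\to 0\) and twists by \(\cO_Q(1)\) to identify \(H^0(T_{\cQ/\proj^3|Q}(1))\) with the kernel of multiplication \(V\otimes V_2\to H^0(\cO_Q(3))\). It then reads off that kernel as \(\textup{X}[1,3]\oplus\textup{X}[3,1]\oplus\textup{X}[1,1]\) from the LiE decomposition together with \(H^0(T_Q(1))=\textup{X}[1,3]\oplus\textup{X}[3,1]\). Your extra details (surjectivity of the multiplication map, the dimension count \(20\), and passing to \(SL_2\times SL_2\) to linearize the action) only make explicit what the paper leaves implicit.
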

\begin{proof}
%[Proof of Lemma \ref{lem_G_splitting}]
    
Recall that we have a short exact sequence
\[0 \to H^0(T_{\cQ/\proj^3|Q}) \to H^0(\cO_Q(2))\otimes \cO_Q \to \cO_Q(2) \to 0,\]
where the right map is multiplication of sections.
It follows that we have an exact sequence
\[0\to H^0(T_{\cQ/\proj^3|Q}(1)) \to H^0(\cO_Q(2))\otimes H^0(\cO_Q(1)) \to H^0(\cO_Q(3)) \to 0.\]
The right most map is a morphism of \(G\)-representations, which by the decomposition of \(G\)-representation we computed with LiE in Lemma \ref{lem_I_1_splitting} and the fact
\[H^0(T_Q(1))\simeq H^0(\cO_Q(1,3))\oplus H^0(\cO_Q(3,1))=X[1,3] \oplus X[3,1]\]
concludes the proof of the lemma.
\end{proof}

We now conclude the proof of the theorem. The above diagram implies that \(\delta \gamma_P \in \Ker \psi^*\). Recall by Lemma \ref{lem_I_1_splitting} that 
\[ \overline{H^0(T_{\mathcal{C}\mid C  }(1))}=
I_1=I^3_1 \oplus H^0(T_Q(1))\]
as \(G\)-representations. By Lemma \ref{lem_G_splitting}, the right hand summand is the only one that appears in \(H^0(T_{\cQ/\proj^3|Q}(1))\), and it is the sum of two irreducible \(G\)-representations X[1,3] , X[3,1]. Automorphisms of \(Q\) exchanging the two rulings permute these two \(G\)-representations, which means \(H^0(T_Q(1))\) is irreducible as an \(\Aut(Q)\)-representation. Since \(\psi\) is \(\Aut(Q)\)-equivariant (by Lemma \ref{lem_truc}) and nontrivial, we deduce \(\Ker \psi^*\subset I_1\) coincides with the subrepresentation \(I_1^3\) defined at the end of the proof of the Lemma \ref{lem_I_1_splitting}, hence our claim.

\end{proof}

\section{The second infinitesimal invariant  \texorpdfstring{\(\delta D_{\gamma_P}^2\)}{Z=gammaPsquare} \label{sec_M2}
%and \texorpdfstring{\(\delta Z_{\gamma_P}^2\)}{Z=gammaPsquare}
}
%\subsection{The morphism \texorpdfstring{\(\overline{M_2}\)}{M2bar}}
%, or more indirectly by the representation-theoretic arguments of Section \ref
Similarly to what we did in Section \ref{sec_deltagamma_P} for \eqref{eq_where_is_waldeltaZ}, we will now give an explicit algebraic description of the space \(E_\infty^{2,0}(\cC^{(2)/U})\) of \eqref{eq_space_delta2}.
Consider the quadratic map

% https://q.uiver.app/#q=WzAsMixbMCwwLCJWXipcXG90aW1lcyBWXzIiXSxbMiwwLCJcXHdlZGdlXjIgVl4qXFxvdGltZXMgXFx3ZWRnZV4yVl8yIl0sWzAsMSwiTV8yIl1d
\[\begin{tikzcd}
	{V^*\otimes V_2} && {\wedge^2 V^*\otimes \wedge^2V_2}
	\arrow["{M_2}", from=1-1, to=1-3]
\end{tikzcd}\]
sending \(\psi \in \Hom(V,V_2)\) to \(\wedge^2 \psi \in \Hom(\wedge^2 V, \wedge^2 V_2)\). The notation \(M_2\) comes from the fact that if we see \(V^*\otimes V_2\) as \(\Hom(V,V_2)\) then the map is sending a matrix \(A\) to the matrix with entries the \(2\times 2\) minors of \(A\).
%The map \(M_2\) is just the wedge product, since the upper row of the diagram is the cup product in the pages of the spectral sequence, which is compatible with the cup product on the abutment, appearing in the lower row. The name \(M_2\) comes from the fact that if we see \(V^*\otimes V_2\) as \(\Hom(V,V_2)\) then the maps is sending a matrix \(A\) to the matrix with entries the \(2\times 2\) minors of \(A\). 
We can extend by linearity the quadratic map \(M_2\) above to a linear map 
\[\Sym^2 (V^*\otimes V_2 ) \xrightarrow{M_2} \wedge^2 V^*\otimes \wedge^2V_2.\]
Recalling the inclusion \(\mu(V) \hookrightarrow V^*\otimes V_2\), we are going to factor \(M_2\) through the quotient \(\Sym^2 (V^* \otimes V_2)\thra \Sym^2I_1=\Sym^2 (\frac{V^* \otimes V_2}{\mu(V)})\). Since the kernel of this quotient map is the subspace \(\mu(V)\cdot  (V^* \otimes V_2)\), defined as the image of \(\mu(V)\otimes   (V^* \otimes V_2)\) in \(\Sym^2 (V^*\otimes V_2)\), we can set
\begin{equation}\label{eq_who_I2}
I_2\coloneqq \frac{\wedge^2 V_2 \otimes \wedge^2 V^*}
{M_2\big(\mu(V) \cdot (V^* \otimes V_2)\big)} 
\end{equation}
and we obtain the diagram
% https://q.uiver.app/#q=WzAsNCxbMCwwLCJWXipcXG90aW1lcyBWXzIiXSxbMCwxLCJJX3sxLFZ9Il0sWzIsMSwiSV97MixWfSJdLFsyLDAsIlxcd2VkZ2VeMiBWXipcXG90aW1lcyBcXHdlZGdlXjJWXzIiXSxbMCwxXSxbMSwyLCJcXG92ZXJsaW5le01fMn0iXSxbMCwzLCJNXzIiXSxbMywyXV0=
\[\begin{tikzcd}
	{\Sym^2(V^*\otimes V_2)} && {\wedge^2 V^*\otimes \wedge^2V_2} \\
	{\Sym^2I_{1}} && {I_{2}}
	\arrow["{M_2}", from=1-1, to=1-3]
	\arrow[from=1-1, to=2-1]
	\arrow[from=1-3, to=2-3]
	\arrow["{\overline{M_2}}", from=2-1, to=2-3]
\end{tikzcd}.\]

Let \(C\subset Q=\{q=0\}\) be a general curve of genus \(4\), and let \[V=H^0(K_C) \ , \ V_2=H^0(K_C^{\otimes 2})=\frac{\Sym^2 V}{q}.\]
Let \(U\to \cM_4\) be an étale or analytic neighbourhood of \([C]\in \cM_4\)

\begin{proposition}\label{prop_M_2}
    The space \(I_2\) identifies with \(L^2H^2(\Omega^2_{\cC^{(2)/U}|C^{(2)}})=E^{2,0}_2 (\cC^{(2)/U})\) from \eqref{eq_space_geometric_I2} and the map \(\overline{M_2}\) identifies with the cup product in cohomology
    \begin{equation}\label{eq_equation13}
        \smallsmile \colon \Sym^2 I_1 =\Sym^2 L^1H^1(\Omega_{\cC^{(2)/U}|C^{(2)}})\to L^{2}H^{2}(\Omega_{\cC^{(2)/U}|C^{(2)}}^{2}).
    \end{equation}

\end{proposition}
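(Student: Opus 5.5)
We will compute \(E_2^{2,0}\) for the family \(X=\cC^{(2)/U}\to U\) with \(k=2\) directly from \eqref{eq_space_geometric_I2}. The first step is the Hodge theory of \(C^{(2)}\). By Macdonald, \(H^{p,q}(C^{(2)})=\wedge^pH^{1,0}(C)\otimes\wedge^qH^{0,1}(C)\) for \(p+q\le 2\), and the whole variation of Hodge structure of \(\cC^{(2)/U}\) in degrees \(\le 2\) is generated under cup product by the weight one variation. With \(V=H^0(K_C)\) and \(H^1(\cO_C)=V^*\) (via Serre duality and \(q\)), the relevant terms are
\[E_1^{2,0}=\wedge^2\Omega_{U,m}\otimes H^2(\cO_{C^{(2)}})=\wedge^2 V_2\otimes\wedge^2V^*,\qquad E_1^{1,0}=V_2\otimes H^1(\Omega_{C^{(2)}}),\]
with \(H^1(\Omega_{C^{(2)}})=V\otimes V^*\), and \(E_1^{0,0}=H^0(\Omega^2_{C^{(2)}})=\wedge^2V\). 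Since \(H^3(\cO_{C^{(2)}})=0\), the differential \(d_1^{2,0}\) lands in zero, so \(\Ker\overline{\nabla}_2=E_1^{2,0}\). This gives \(E_2^{2,0}=\wedge^2V_2\otimes\wedge^2V^*/\im\overline{\nabla}_1^{1,1}\). By the degeneration \cite[Proposition 2.5]{Voisin_pirola}, this equals \(L^2H^2(\Omega^2_{\cC^{(2)/U}|C^{(2)}})\), because \(L^3=0\) as \(\Omega^3_U\otimes H^{-1}=0\).

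The second step identifies \(\im\overline{\nabla}_1\). The Gauss--Manin differentials are derivations with respect to cup product, since the spectral sequence is multiplicative. On \(H^{1,0}\) the differential is \(\mu\), viewed as a map \(V\to V_2\otimes V^*\), and by Griffiths transversality it kills \(H^{0,1}\) in the \(E_1\) page. Write a class in \(V_2\otimes H^{1,1}\) as a sum of terms \(\omega\otimes(v\smallsmile\bar w)\) with \(\omega\in V_2\), \(v\in V\), \(\bar w\in V^*\). Its image is \(\omega\wedge\overline{\nabla}(v)\smallsmile\bar w\), which is the wedge in \(\wedge^2V_2\otimes\wedge^2V^*\) of \(\mu(v)\in V^*\otimes V_2\) with the decomposable tensor \(\bar w\otimes\omega\in V^*\otimes V_2\). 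I would then check by a direct computation with matrices that \(\Phi\wedge\Psi\) (the product on \(\wedge^2\) induced by the \((V^*\otimes V_2)\)-module structure) is exactly the polarization \(M_2(\Phi\cdot\Psi)\) of the quadratic map of \(2\times2\) minors. Since decomposable tensors \(\bar w\otimes\omega\) span \(V^*\otimes V_2\), this gives \(\im\overline{\nabla}_1=M_2(\mu(V)\cdot(V^*\otimes V_2))\), hence \(E_2^{2,0}=I_2\). Sign and normalization constants are irrelevant since we only identify images.

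The third step matches \(\overline{M_2}\) with the cup product \eqref{eq_equation13}. The cup product \eqref{eq_graded_respected} on graded pieces is computed on \(E_1\) pages as \((\alpha\otimes a)\smallsmile(\beta\otimes b)=(\alpha\wedge\beta)\otimes(a\smallsmile b)\) for \(\alpha,\beta\in\Omega_U\) and \(a,b\in H^{0,1}\). On \(C^{(2)}\), the cup product \(H^{0,1}\otimes H^{0,1}\to H^{0,2}=\wedge^2V^*\) is the wedge, again by Macdonald. So for \(\Phi=\sum\omega_i\otimes a_i\), the cup square is \(\sum_{i,j}\omega_i\wedge\omega_j\otimes a_i\wedge a_j\), which is \(2M_2(\Phi)\), the matrix of \(2\times 2\) minors. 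Passing to quotients gives the claim, using step two for well-definedness.

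The main obstacle is making rigorous the multiplicativity of the filtration spectral sequence \eqref{eq_ss1}, namely that the \(E_1\) product is the wedge times cup product and that \(d_1\) is a derivation for it. The second half of this is also the input for computing \(\im\overline{\nabla}_1\) via Macdonald's description. I would try to deduce it from the Koszul filtration being multiplicative at the sheaf level, \(L^i\Omega^a\wedge L^j\Omega^b\subset L^{i+j}\Omega^{a+b}\), together with functoriality of the spectral sequence of a filtered complex with a compatible product.
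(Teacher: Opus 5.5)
Your proposal is correct and follows essentially the same route as the paper: use the degeneration of \eqref{eq_ss1} to write $L^2H^2(\Omega^2_{\cC^{(2)/U}|C^{(2)}})=E_2^{2,0}$ as the quotient of $\wedge^2V^*\otimes\wedge^2V_2$ by $\im\overline{\nabla}^{1,1}_1$, identify $M_2$ with the $E_1$-level cup product (wedge on both factors), and compute $\im\overline{\nabla}^{1,1}_1$ via the Leibniz rule $\overline{\nabla}^{1,1}_1=\overline{\nabla}^{1,0}_0\otimes\id$, which gives exactly $M_2(\mu(V)\cdot(V^*\otimes V_2))$. One small correction, which the paper's own text shares: Macdonald actually gives $H^{1,1}(C^{(2)})=(V\otimes V^*)\oplus\C\,\eta$, where $\eta$ is the class of $x+C$ for a point $x\in C$, so degree-one classes do not generate $H^{1,1}$; however, $\eta$ is a flat section of type $(1,1)$, hence $\overline{\nabla}\eta=0$, and your computation of the image is unaffected.
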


%The analogous statement also holds with \(J \to U\) in place of \(\cC^{(2)/U} \to U\), although it
%The latter assertion 
%is not needed in what follows: we state it for completeness since the proof is essentially the same.
%Note  it shows us that second infinitesimal invariants of \(\cC^{(2)/U} \to U \) and \(J \to U\) coincide. 

\begin{proof}

Note that the map
\[M_2 \colon \Hom(V,V_2)^{\otimes 2} \to \Hom(\wedge^2 V , \wedge^2 V_2)\]
\[\alpha \otimes \beta \mapsto (v\wedge w\mapsto \alpha(v)\wedge \beta (w))\]
coincides up to natural identifications with the natural map
\begin{equation}\label{eq_page13truc2}(V^*\otimes V^* )\otimes (V_2 \otimes V_2)\to \wedge^2V^* \otimes \wedge^2 V_2\end{equation}

induced by the wedge product on both factors. The wedge product map \eqref{eq_page13truc2} identifies with the cup product
\[(H^1(\cO_{C^{(2)}})\otimes \Omega_{U,[C]})^{\otimes 2} \to H^2(\cO_{C^{(2)}}) \otimes \Omega_{U,[C]}^2,\]
which yields a commutative diagram
% https://q.uiver.app/#q=WzAsNCxbMCwwLCJcXFN5bV4yKFZeKlxcb3RpbWVzIFZfMikiXSxbMCwxLCJcXFN5bV4ySV97MX0iXSxbMiwxLCJcXGZyYWN7XFx3ZWRnZV4yIFZeKlxcb3RpbWVzIFxcd2VkZ2VeMlZfMn17XFxvdmVybGluZXtcXG5hYmxhfV57MSwxfV8xXFxiaWcoVl8yIFxcb3RpbWVzIEheMShcXE9tZWdhX3tDXnsoMil9fSlcXGJpZyl9Il0sWzIsMCwiXFx3ZWRnZV4yIFZeKlxcb3RpbWVzIFxcd2VkZ2VeMlZfMiJdLFswLDFdLFsxLDIsIlxcc21hbGxzbWlsZSJdLFswLDMsIk1fMiJdLFszLDJdXQ==
\begin{equation}\label{eq_truc_page13}
    \begin{tikzcd}
	{\Sym^2(V^*\otimes V_2)} && {\wedge^2 V^*\otimes \wedge^2V_2} \\
	{\Sym^2I_{1}} && {L^2H^2(\Omega^2_{\cC^{(2)/U}})}
	\arrow["{M_2}", from=1-1, to=1-3]
	\arrow[from=1-1, to=2-1]
	\arrow[from=1-3, to=2-3]
	\arrow["\smallsmile", from=2-1, to=2-3]
\end{tikzcd},
\end{equation}
where \[L^2H^2(\Omega^2_{\cC^{(2)/U}})\simeq \frac{\wedge^2 V^*\otimes \wedge^2V_2}{\overline{\nabla}^{1,1}_1\big(V_2 \otimes H^1(\Omega_{C^{(2)}})\big)}\]
by \cite[Proposition 2.5]{Voisin_pirola}.

Comparing with the diagram \eqref{eq_truc_page13}, we deduce that 
\[M_2\big(\mu(V)\cdot (V^* \otimes V_2)\big)\subset  \overline{\nabla}^{1,1}_1\big(V_2 \otimes H^1(\Omega_{C^{(2)}})\big) \]
and that the cup-product \eqref{eq_equation13} factors through map \(\overline{M_2}\). The entire proposition will then follow as soon as we can show that the above inclusion is an equality. Indeed, we can show that the maps
\[M_2 \circ (\mu \otimes \id) \colon V \otimes (V^* \otimes V_2) \to \wedge^2 V^*\otimes \wedge^2V_2\]
\[\overline{\nabla}^{1,1}_1 \colon   H^1(\Omega_{C^{(2)}}) \otimes V_2 \to \wedge^2 V^*\otimes \wedge^2V_2\]
%\[\overline{\nabla}^{1,1}_1 \colon   H^1(\Omega_{J_C}) \otimes V_2 \to \wedge^2 V^*\otimes \wedge^2V_2\]
have the same image.
We have \( V\otimes V^*\simeq H^1(\Omega_{C^{(2)}})\).
Since \(\overline{\nabla}^{1,1}_1 =\overline{\nabla}^{1,0}_0 \otimes \id_{V_2}\), under these identifications the latter map coincides with
\[ V\otimes V^*\otimes V_2 \to \wedge^2 V^*\otimes \wedge^2 V_2  \]
\[v \otimes \varphi \otimes p \mapsto \sum_i (v_i^* \wedge \varphi ) \otimes (v_i v \wedge p)\]
which is exactly \(M_2 \circ (\mu \otimes \id) \) up to the identifications mentioned in the beginning of the proof.

\end{proof}

\section{Restricting \texorpdfstring{\(\overline{M_2}(\delta D_{\gamma_P}^2)\)}{M2gamma2P} to lines in \(Q\)}\label{sec_restrict_to_lines}

In this final section we will use the explicit description of the square map given in Section \ref{sec_M2} together with Theorem \ref{thm_griffiths_completion} to prove that \(\delta D_{\gamma_P}^2(m) \neq 0\) for general \(m=[C]\in \cM_4\).

Let \(S\subseteq V\) be a \(2\)-dimensional subspace such that \(q_{|S}= 0\) and \(E\coloneqq V/S\). The restriction maps
\begin{equation} \label{eq_last1}
V^* \to S^* \ \ \textup{and } \ 
    V_2 \to \Sym^2 E,
\end{equation} where the latter is well defined since \(q_{|S}=0\), induce as morphism 
\begin{equation}\label{eq_rS}
   R_S \colon  V^* \otimes V_2 \to S^* \otimes \Sym^2 E.
\end{equation}

\begin{lemma}\label{lem_final}
    The subspace \(\mu(V)\subset V^* \otimes V_2\) vanishes under \(R_S\).
\end{lemma}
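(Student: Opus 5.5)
We simply unwind the definitions. Recall that \(\mu(v)\in V^*\otimes V_2\simeq \Hom(V,V_2)\) is the map \(w\mapsto vw \bmod q\). Under \(R_S\), an element \(\psi\in\Hom(V,V_2)\) is sent to its restriction to \(S\subset V\), composed with the projection \(V_2\to \Sym^2 E\). So \(R_S(\mu(v))\) is the map
\[S\ni s\longmapsto \overline{vs}\in \Sym^2 E .\]
The first step is to check that \(V_2\to\Sym^2E\) is well defined. The natural surjection \(\Sym^2 V\to \Sym^2(V/S)=\Sym^2 E\) has kernel \(S\cdot V\). We must show that \(q\) lies in this kernel, i.e.\ that \(q\) vanishes identically on the image of \(S^\perp\) under the appropriate identification.

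To make this precise, we fix the convention on the quadratic form. Since \(q\in\Sym^2V\) and \(q_{|S}=0\), the paper regards \(q\) as a quadratic form via the duality \(V\simeq V^*\) induced by \(q\). In coordinates, choose a basis \(s_1,s_2,e_1,e_2\) of \(V\) with \(s_i\in S\), and identify \(V\) with \(H^0(\cO_Q(1))\), where \(S\) corresponds to linear forms. Then ``\(q_{|S}=0\)'' should be read as saying that \(q\) lies in the ideal generated by \(S\) in \(\Sym^\bullet V\), i.e.\ \(q\in S\cdot V\). Equivalently, the line in \(\proj(V^*)\) cut out by \(S\) is contained in \(Q\). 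On the Segre quadric \(q=x_0x_1+x_2x_3\), for example, \(S=\langle x_0,x_2\rangle\) works. With this reading, \(V_2=\Sym^2V/\C q\to \Sym^2 V/(S\cdot V)=\Sym^2E\) is well defined, because \(\C q\subset S\cdot V\).

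The second step is now immediate. For \(s\in S\) and \(v\in V\), the product \(vs\) lies in \(S\cdot V\), hence maps to zero in \(\Sym^2E\). Therefore \(R_S(\mu(v))(s)=0\) for all \(s\in S\), i.e.\ \(R_S(\mu(v))=0\). By linearity, \(R_S(\mu(V))=0\).

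The only delicate point is the bookkeeping of dualities: one must make sure that the hypothesis ``\(q_{|S}=0\)'' means \(q\in S\cdot V\), i.e.\ \(S\) cuts out a line of \(Q\), consistent with the section title. If instead \(S\) is meant as an isotropic subspace for the bilinear form, one first transports everything through \(V\simeq V^*\). Under that identification, isotropic \(2\)-planes correspond exactly to planes \(S'\) of linear forms with \(q\in S'\cdot V\), so the same argument applies verbatim.
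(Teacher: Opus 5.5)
Your argument is correct and is essentially the paper's: \(R_S(\mu(a))\) sends \(s\in S\) to the class of \(as\), which lies in \(S\cdot V=\Ker(\Sym^2V\to\Sym^2E)\), i.e.\ it vanishes on the line \(\Delta\subset Q\) cut out by \(S\). The paper takes \(S\) to be the ideal of a line in \(Q\) (so \(q\in S\cdot V\)) as part of the setup, so your discussion of how to read \(q_{|S}=0\) and your check of well-definedness are harmless but not needed.
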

\begin{proof}
Let \(a\in V\). The map \(\mu(a) \in \Hom (V,V_2)\) ends \(x \) to \(ax\). The subspace \(S\) is the ideal of a line \(\Delta\subset Q\). For any \(x\in S\),  \(ax\) vanishes on \(\Delta\), hence in \(\Sym^2 E\).
\end{proof}
\begin{corollary}\label{lem_restricting_to_lines}
     With the same notation as above, there is a natural map
    \[R_{S,2} \colon \wedge^2 V^* \otimes \wedge^2 V_2 \to \wedge^2 S^* \otimes \wedge^2(\Sym^2E) \]
    which factors through a map
    \[r_{S,2}\colon I_{2} \to \wedge^2 S^* \otimes \wedge^2 (\Sym^2E).\]
   such that  $r_{S,2}(\overline{M_2(\phi)})=M'_2(r_{S}(\phi)$, where $M'_2$ is the 2x2 minor map for \(Hom(S^*, Sym^2E)\).
\end{corollary}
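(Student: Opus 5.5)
We define \(R_{S,2}\) as the tensor product of the second exterior powers of the two restriction maps in \eqref{eq_last1}. That is, \(\wedge^2(V^*\to S^*)\otimes \wedge^2(V_2\to \Sym^2E)\), which is natural since both restriction maps are.

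The compatibility with the minor maps is then formal, even before passing to quotients. For \(\phi\in \Hom(V,V_2)=V^*\otimes V_2\), write \(\rho\colon V_2\to \Sym^2E\) for the restriction and \(\iota\colon S\hookrightarrow V\) for the inclusion, so that \(R_S(\phi)=\rho\circ\phi\circ\iota\). Then
\[\wedge^2(\rho\circ\phi\circ\iota)=\wedge^2\rho\circ\wedge^2\phi\circ\wedge^2\iota ,\]
by functoriality of \(\wedge^2\). This says exactly that
\[R_{S,2}\circ M_2=M_2'\circ R_S\]
on \(V^*\otimes V_2\). Both sides are quadratic in \(\phi\), so the identity also holds on the linearized maps from \(\Sym^2(V^*\otimes V_2)\) by polarization.

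To obtain the factorization \(r_{S,2}\) through \(I_2\), we must show that \(R_{S,2}\) kills \(M_2\big(\mu(V)\cdot(V^*\otimes V_2)\big)\). An element of this subspace is a sum of terms \(M_2(\mu(a)\cdot\beta)\), where \(\mu(a)\cdot\beta\) is the symmetric product and \(M_2\) is linearized. By the identity above,
\[R_{S,2}\big(M_2(\mu(a)\cdot\beta)\big)=M_2'\big(R_S(\mu(a))\cdot R_S(\beta)\big),\]
using that \(R_S\) induces \(\Sym^2 R_S\) compatibly. Lemma \ref{lem_final} gives \(R_S(\mu(a))=0\), so each such term vanishes. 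Hence \(R_{S,2}\) descends to \(r_{S,2}\colon I_2\to \wedge^2S^*\otimes\wedge^2(\Sym^2E)\). Likewise \(R_S\) descends to \(r_S\colon I_1\to S^*\otimes\Sym^2E\), again by Lemma \ref{lem_final}, which is what gives meaning to \(r_S(\phi)\) for \(\phi\in I_1\).

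Finally, chasing the commutative square defining \(\overline{M_2}\) yields \(r_{S,2}(\overline{M_2}(\phi))=M_2'(r_S(\phi))\) for \(\phi\in I_1\), and by linearity on \(\Sym^2 I_1\). The only point requiring care is the polarization step: we must check that the linearization of the quadratic map \(M_2\) commutes with restriction. This follows because the bilinear form \((\alpha,\beta)\mapsto \alpha\wedge\beta\), whose diagonal is \(M_2\), is itself natural under \(\rho\) and \(\iota\). We expect no real obstacle; the argument is entirely functorial once Lemma \ref{lem_final} is available.
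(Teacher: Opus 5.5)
Your proposal is correct and follows essentially the same route as the paper. Functoriality of \(\wedge^2\) under the restrictions \eqref{eq_last1} gives the commutative square \(R_{S,2}\circ M_2 = M_2'\circ \Sym^2 R_S\), and Lemma \ref{lem_final} shows that \(\Sym^2 R_S\) kills \(\mu(V)\cdot(V^*\otimes V_2)\), so \(R_{S,2}\) descends to \(I_2\); you additionally spell out the polarization step and the descent of \(R_S\) to \(r_S\colon I_1\to S^*\otimes\Sym^2E\), both of which the paper leaves implicit.
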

\begin{proof}
The restriction map \(R_S\) is induced by the two restriction maps \eqref{eq_last1}. Functoriality of the \(2\times2\) minor map \(M_2\) thus yields a commutative diagram

% https://q.uiver.app/#q=WzAsNCxbMCwwLCJcXFN5bV4yKFZeKiBcXG90aW1lcyBWXzIpIl0sWzEsMCwiXFx3ZWRnZV4yIFZeKiBcXG90aW1lcyBcXHdlZGdlXjIgVl8yIl0sWzAsMSwiXFxTeW1eMihTXiogXFxvdGltZXMgXFxTeW1eMkUpIl0sWzEsMSwiXFx3ZWRnZV4yIFNeKiBcXG90aW1lcyBcXHdlZGdlXjIgXFxTeW1eMkUiXSxbMCwxLCJNXzIiXSxbMCwyLCJcXFN5bV4yUl9TIiwyXSxbMiwzLCJNX3syfSciXSxbMSwzLCJSX3tTLDJ9Il1d
\[\begin{tikzcd}
	{\Sym^2(V^* \otimes V_2)} & {\wedge^2 V^* \otimes \wedge^2 V_2} \\
	{\Sym^2(S^* \otimes \Sym^2E)} & {\wedge^2 S^* \otimes \wedge^2 \Sym^2E}
	\arrow["{M_2}", from=1-1, to=1-2]
	\arrow["{\Sym^2R_S}"', from=1-1, to=2-1]
	\arrow["{R_{S,2}}", from=1-2, to=2-2]
	\arrow["{M_{2}'}", from=2-1, to=2-2]
\end{tikzcd}\]
%\[\wedge^2 V^* \otimes \wedge^2 V_2 \to \wedge^2 S^* \otimes \wedge^2 (\Sym^2E )\]

By Lemma \ref{lem_final}, the left vertical map vanishes on \(\mu(V)\cdot (V^* \otimes V_2)\), hence the right vertical map factors through \(I_2\). This gives the desired map \(r_{S,2}\).

\end{proof}

We finally prove
\begin{proposition}\label{prop_nonvanishing}
For the general \(m\in \cM_4\), there exists a subspace \(S\subseteq V\) as in Lemma \ref{lem_restricting_to_lines} such that we have
\[r_{S,2}(\delta D_{\gamma_P}^2(m))\neq 0.\]
\end{proposition}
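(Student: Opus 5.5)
We reduce the statement to an explicit computation in the small space \(\Hom(S,\Sym^2 E)\), where \(\dim S=\dim E=2\). By Proposition \ref{prop_M_2}, \(\delta D_{\gamma_P}^2(m)=\overline{M_2}(\delta D_{\gamma_P}(m)^2)\). By Corollary \ref{lem_restricting_to_lines}, \(r_{S,2}(\delta D_{\gamma_P}^2(m))=M_2'(R_S(\phi))\) for any lift \(\phi\in V^*\otimes V_2\) of \(\delta D_{\gamma_P}(m)\). By Theorem \ref{thm_griffiths_completion} and Theorem \ref{thm_griffiths}, we may take \(\phi=\lambda\,\sigma(t)\) with \(\lambda\neq 0\), where \(t\) is the cubic cutting out \(C\) on \(Q\). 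That is, \(\phi\) is the map \(\partial\mapsto \partial t \bmod q\), viewing \(V^*\) as the space of constant vector fields. For \(\lambda\), Griffiths' theorem gives proportionality, and we need the constant to be nonzero for general \(C\). If only Theorem \ref{thm_griffiths_completion} is used, we still need \(\delta D_{\gamma_P}(m)\neq 0\); this also follows from Griffiths' result that \(t\) is recovered from it. The problem thus reduces to showing that \(M_2'(R_S\sigma(t))\neq 0\) for a suitable line.

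Now make \(R_S\sigma(t)\) explicit. Let \(\Delta=\proj(E)\subset Q\) be the line whose ideal is generated by \(S\). For \(\partial\in V^*\), the element \(R_S\sigma(t)(\partial)\in\Sym^2E=H^0(\cO_\Delta(2))\) is \((\partial t)_{|\Delta}\), which depends only on the image of \(\partial\) in \(S^*\), i.e. on the direction of \(\partial\) normal to \(\Delta\). Choose coordinates with \(Q=\{x_0x_3-x_1x_2=0\}\) and \(\Delta=\{x_0=x_1=0\}\), so that \(S=\langle x_0,x_1\rangle\) and \(E\) has coordinates \(x_2,x_3\). Then \(R_S\sigma(t)\) is the pair of binary quadrics
\[
a=(\partial_0 t)_{|\Delta},\qquad b=(\partial_1 t)_{|\Delta}.
\]
The minor map \(M_2'\colon \Hom(S^*,\Sym^2E)\to\wedge^2S^*\otimes\wedge^2\Sym^2E\simeq\wedge^2\Sym^2E\) sends this pair to \(a\wedge b\). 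So the claim is that \(a\) and \(b\) are linearly independent for some line \(\Delta\) in one of the rulings.

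To finish, it suffices to exhibit one pair \((Q,t)\) with smooth intersection and one line \(\Delta\subset Q\) for which \(a\wedge b\neq 0\). Openness then gives the result for general \(m\), since the condition is Zariski open on the incidence of (curve, line) and the lines form a finite-type family over \(U\). Take for example \(t=x_0x_2^2+x_1x_3^2+(\text{terms of order }\ge 2\text{ in }x_0,x_1)+\dots\). Then \(a=x_2^2\) and \(b=x_3^2\) are independent, and the higher-order terms can be chosen generically so that \(Q\cap\{t=0\}\) is smooth, by Bertini applied to the linear system of cubics containing \(x_0x_2^2+x_1x_3^2\) modulo \(I_\Delta^2\). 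Note also that \(t\) is only defined modulo \(qV\). Replacing \(t\) by \(t+\ell q\) changes \((a,b)\) by \(\ell_{|\Delta}\cdot((\partial_0q)_{|\Delta},(\partial_1q)_{|\Delta})=\ell_{|\Delta}(x_3,-x_2)\). This is exactly \(R_S\mu(\ell)\), which vanishes by Lemma \ref{lem_final}, so the computation is consistent.

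The main obstacle is the normalization at the start. We must know that the invariant equals \(\sigma(t)\) up to a \emph{nonzero} scalar, with the correct identification of \(V^*\otimes V_2\) with \(\Hom(V^*,V_2)\) via \(q\), and that this holds on a dense open set. If Griffiths' proportionality constant is hard to control directly, the fallback is to check that \(\delta D_{\gamma_P}\neq 0\) at a single point using \(\psi\) from Lemma \ref{lem_truc}. Nonvanishing of \(\delta\gamma_P\) is an open condition, and by Theorem \ref{thm_griffiths_completion} it combines with the Bertini step above.
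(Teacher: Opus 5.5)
Your reduction matches the paper's: Proposition \ref{prop_M_2} and Corollary \ref{lem_restricting_to_lines} reduce everything to \(M_2'(R_S\sigma(t))\), and then one example plus openness finishes. However, two steps fail as written.

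\textbf{The example curve is singular.} Every cubic to which you apply Bertini satisfies \(t\equiv x_0x_2^2+x_1x_3^2 \pmod{I_\Delta^2}\). Such a \(t\) lies in \(I_\Delta=(x_0,x_1)\), so \(t_{|\Delta}=0\) and \(\Delta\subset Q\cap\{t=0\}\). That complete intersection is then reducible and connected, hence never a smooth genus \(4\) curve. Concretely, at \(p\in\Delta\) you get \(dq_p=x_3\,dx_0-x_2\,dx_1\) and \(dt_p=x_2^2\,dx_0+x_3^2\,dx_1\), whatever the \(I_\Delta^2\)-terms are. These are proportional at the three points of \(\Delta\) where \(x_2^3+x_3^3=0\). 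Bertini only gives smoothness off the base locus, which here is \(\Delta\) itself. You need \(t_{|\Delta}\neq0\), for instance by adding a general binary cubic \(c(x_2,x_3)\); this makes the system base-point-free. The paper's \(t_\epsilon\) satisfies \(t_{\epsilon|\Delta}=(1+\epsilon^3)(X_2^3+X_3^3)\neq0\).

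\textbf{The consistency check is false, and the formula for \(R_S\sigma(t)\) is wrong.} The term \(\ell_{|\Delta}(x_3,-x_2)\) is not zero, so \(a\wedge b\) is not an invariant of the class of \(t\) modulo \(qV\). For instance, \(t=c(x_2,x_3)\) gives \(a\wedge b=0\), while the same class \(t+x_2q\) gives \(-x_2x_3\wedge x_2^2\neq0\). Likewise, \((\partial t)_{|\Delta}\) does not depend only on the image of \(\partial\) in \(S^*\): for \(\partial\) along \(\Delta\) it equals \(\partial(t_{|\Delta})\).

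The source of the error is that \(\sigma(t)\) lives in \(\Hom(V^*,V_2)\). Before applying \(R_S\), which restricts to \(S\subset V\), you must compose with the inverse of the duality \(V^*\to V\), \(\partial\mapsto\partial q\). Because \(\Delta\subset Q\), this duality sends the plane \(\hat\Delta=\langle\partial_2,\partial_3\rangle\subset V^*\) over \(\Delta\) onto \(S\), via \(x_0=\partial_3q\) and \(x_1=-\partial_2q\). Hence, up to sign and the generator of \(\wedge^2S^*\), \(M_2'(R_S\sigma(t))=\partial_2(t_{|\Delta})\wedge\partial_3(t_{|\Delta})\). This is visibly unchanged by \(t\mapsto t+\ell q\). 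It is nonzero iff the binary cubic \(t_{|\Delta}\) is not a cube, i.e. iff \(\Delta\) does not meet \(C\) in a single triple point.

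Your literal example gives \(0\) for this quantity. The paper's \(t_\epsilon\) does not, since \(X_2^3+X_3^3\) has independent partials. With the correct formula you need neither an example nor openness: a general line of either ruling meets \(C\) in three distinct points (a general fibre of the trigonal map). So the statement holds for every \(C\) on a smooth quadric, granted \(\lambda\neq0\).
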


\begin{corollary*}[Theorem \ref{thm_deltagamma^2_nonvanishing}]
    The infinitesimal invariant \(\delta D_{\gamma_P}^2(m) \in I_2\) is nonzero. In particular,
    \[D_{\gamma_P}^2  \neq 0 \ \ \ \textup{in}\  \  \CH^2(\cC_m^{(2)})_{\Q}.\]
\end{corollary*}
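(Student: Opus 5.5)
The statement is a corollary of Proposition \ref{prop_nonvanishing}, and I would argue in three steps.

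First, I would check that \(\delta D_{\gamma_P}^2(m)\) is exactly \(\overline{M_2}\) applied to the square of \(\delta D_{\gamma_P}(m)\). Since \(D_{\gamma_P}\) is fiberwise nullhomologous, its class lies in \(L^1\). By the compatibility \eqref{eq_graded_respected} of cup product with the filtration \(L^\bullet\), the second infinitesimal invariant satisfies
\[\delta_2(D_{\gamma_P}^2)=\delta_1 D_{\gamma_P}\smallsmile\delta_1 D_{\gamma_P}.\]
Proposition \ref{prop_M_2} identifies this cup product with \(\overline{M_2}(\delta D_{\gamma_P}(m)^{2})\in I_2\), where \(I_2=L^2H^2(\Omega^2_{\cC^{(2)/U}|C^{(2)}})\). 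The sign ambiguity of \(\delta D_{\gamma_P}\) disappears after squaring, so this element is well defined on \(U\) and not only on \(\widetilde U\).

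Second, the nonvanishing in \(I_2\) follows formally. By Corollary \ref{lem_restricting_to_lines}, \(r_{S,2}\) is a well-defined linear map on \(I_2\). By Proposition \ref{prop_nonvanishing}, for general \(m\) there is an isotropic plane \(S\) with \(r_{S,2}(\delta D_{\gamma_P}^2(m))\neq0\). A linear map sends \(0\) to \(0\), so \(\delta D_{\gamma_P}^2(m)\neq 0\) in \(I_2\).

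Third, I would deduce the Chow statement by contradiction. Suppose \(D_{\gamma_P}^2(m)=0\) in \(\CH^2(\cC_m^{(2)})_\Q\) for \(m\) in a set that is not contained in a countable union of proper closed subsets. A standard Hilbert-scheme and countability argument then gives an étale or analytic open \(U'\subset U\) and an integer \(N\) such that \(N\,D_{\gamma_P}^2\) restricted to \(\cC^{(2)/U'}\) is rationally equivalent to a cycle supported on \(f^{-1}(Z)\) for a proper closed \(Z\subset U'\). Shrinking \(U'\) so that it misses \(Z\) makes the Dolbeault class of \(D_{\gamma_P}^2\) vanish in \(H^2(\Omega^2_{\cC^{(2)/U'}})\). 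Its image \(\delta_2\) at a general point \(m\) would then be zero, contradicting the second step. This is exactly the content of \cite[Proposition 1.8]{Voisin94}, which I would cite rather than reprove. It gives nonvanishing in \(\CH_0(\cC_m^{(2)})_\Q\) for very general \(m\).

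Essentially all the difficulty is hidden in Proposition \ref{prop_nonvanishing}. To prove it, I would use Theorem \ref{thm_griffiths_completion} to write \(\delta D_{\gamma_P}=\sigma(t)\), with \(\sigma(t)\colon\partial_i\mapsto\partial_i t\) and \(t\) the cubic. I would then compute \(R_S\sigma(t)\in\Hom(S^*,\Sym^2E)\) for the line \(\Delta\) cut out by \(S\). This map sends the two normal derivatives along \(\Delta\) to their restrictions to \(\Delta\), which are binary quadrics. Its \(2\times2\) minor is nonzero exactly when these two quadrics are linearly independent. For a general cubic, and a general line of either ruling, that independence holds; I would verify it on an explicit example such as the Fermat cubic on the Segre quadric.
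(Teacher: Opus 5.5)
Your proposal is correct and is essentially the paper's argument. The corollary is read off from Proposition \ref{prop_nonvanishing} by linearity of \(r_{S,2}\) on \(I_2\), with \(\delta_2(D_{\gamma_P}^2)=\overline{M_2}((\delta D_{\gamma_P})^{2})\) coming from \eqref{eq_graded_respected} and Proposition \ref{prop_M_2}, and the Chow statement then follows from \cite[Proposition 1.8]{Voisin94}. Your sketch of Proposition \ref{prop_nonvanishing} also matches the paper's, with two small differences. First, the paper evaluates on the fixed line \(\proj(S_0)\), \(S_0=\langle X_0,X_1\rangle\), using the perturbed cubic \(t_\epsilon\); its displayed formula carries a factor \(\epsilon^2\), so the unperturbed Fermat cubic gives \(0\) on that line, and you are right to insist on a general line. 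Second, the paper makes the passage from one example to a general \(m\) explicit through openness of the maximal-rank locus \(W'\) on the incidence variety \(Y\).
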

\begin{proof}[Proof of Proposition \ref{prop_nonvanishing}]
    By Section \ref{sec_deltagamma_P} 
    %\eqref{eq_where_waldone_gamma_P}
    and Theorem \ref{thm_griffiths_completion}, \(\delta D_{\gamma_P}(m)\) defines up to sign a class in \(I_{1}=\frac{\Hom(V,V_2)}{\mu(V)}\) which is represented by \(\sigma(t_m)\in \Hom(V,V_2)\) where \(t_m\in H^0(\cO_Q(3))\) is the unique cubic on \(Q\) which cuts out \(C\)
    and \(\sigma\colon H^0(\cO_Q(3))\to V^*\otimes V_2\) is the map defined in 
    Lemma \ref{lem_I_1_splitting}, sending a cubic polynomial \(t\) to \(\sum_i v_i^* \otimes v_i^*(t)\).
    We have 
    \begin{equation}\label{eq_look_at_this}
        r_{S,2}(M_2(\sigma(t)^2))=v^*_1\wedge v_2^* \otimes v^*_1(t)\wedge v_2^*(t).
    \end{equation}
%To conclude, it suffices to find a line \(\proj(S)\subset Q\) such that \(S\) is spanned by two vectors of the form \(\partial_1 (q), \partial_2(q)\) where \(\partial_1,\partial_2\in V^*\) such that \(\partial_1(t),\partial_2(t)\in \Sym^2 E\) are linearly indipendent. To do this, 
We need to show that the locus \(W\subset \cM_4\) of those non-hyperelliptic points \(m\in \cM_4\) such that there exists an \(S\) for which the above expression is nonzero at \(t=t_m\) is open and nonempty. 
To prove nonemptiness, we take the curve cut out by the Segre quadric \(q_0\) and the cubic 
\[t_\epsilon \coloneqq (X_0 + \epsilon X_2)^3+(X_1 + \epsilon X_3)^3 +X_2^3 + X^3_3\]
for a small choice of \(\epsilon \in \C^*\).
Indeed, for \(\epsilon =0\) the curve is smooth so it will still be smooth for a small nonzero \(\epsilon.\) Moreover, we can take \(S_0\coloneqq \langle X_0,X_1\rangle_\C\) and have
\[r_{S_0,2}(M_2(\sigma(t_0)^2))
%\frac{\partial}{\partial X_0}\wedge \frac{\partial}{\partial X_1} \otimes 3(X_0+\epsilon X_2)^2\wedge 3(X_1+\epsilon X_3)^2
\equiv \frac{\partial}{\partial X_0}\wedge \frac{\partial}{\partial X_1} \otimes 3 \epsilon^2 X^2_2\wedge 3\epsilon^2 X^2_3 \neq 0 .\]

Let $\cM_4'$ be the space  of smooth complete intersections $C$ of a quadric $Q$ and a cubic  in \(\mathbb{P}^3\), which admits a natural map \(\pi \colon \cM_4' \to \cM_4\) by the universal property of the latter.
Let  \(Y\coloneqq \{([C],S) \in \cM_4'\times \Gr(2,V)| \proj(S)\subset Q_C\}\) and consider the subspace \(W'\subset Y\) parameterizing  pairs $([C],S)$, with 
\(r_{S,2}(M_2(\sigma(t_{[C]})^2))\neq0\). If $\pr_1: Y\rightarrow \cM_4'$ denotes the projection on the first factor, we then have
\[\pr_1(W')=\pi^{-1}(W).\]
Since \(\pr_1\) is open
and \(\pi(\pi^{-1}(W))= W\) (as \(\pi\) is surjective on the non-hyperelliptic locus),
 it suffices to show that \(W'\) is open.

Recall the notation $R_S:I_{1}\rightarrow \Hom(S,\Sym^2E)$ of (\ref{eq_rS}).  As \(([C],S)\) varies in \(Y\), $R_S(\delta D_{\gamma_P})$ defines a morphism $$R(\delta D_{\gamma_P}) \colon \mathcal{S}\rightarrow Sym^2\mathcal{E} $$
between the tautological vector bundles  on $Gr(2,V)$, pulled-back to  $Y$. The subset $W'$ is by Corollary 6.2 the locus where  $R(\delta D_{\gamma_P})$ has maximal rank, hence it is open, which concludes the proof.

\begin{comment}
    
By Lemma \ref{lem_univ_family_fixed_cubic}, all small deformations of the curve \(C=V(t) \cap Q\) corresponding to a point \(m_0 \in U\) are intersections of the same cubic \(V(t)\) and small deformations \(\cQ_m\) of the quadric \(Q\). For any chosen small deformation of the quadric \(\cQ \to \Delta\), we can deform the line \(\ell_0\coloneqq \proj(S_0)\) along with \(Q\) inside \(\cQ\), so that we have a family of lines \(\ell \subset \proj^3 \times \Delta\) over \(\Delta\) with \(\ell_m \subset \cQ_m,\) which corresponds to a family of \(2\)-planes \(S_m\subset V\).

Looking at \eqref{eq_look_at_this}, it is clear that \(r_{S,2}(M_2(\sigma(t)^2))\) is algebraic in both the coefficients of the linear polynomials cutting out \(S\) and in the coefficients of \(t\), so if
\[m \mapsto r_{S_m,2}(M_2(\sigma(t)^2))\] were nonzero for \(m=m_0\), it would stay nonzero for any \(m \in \Delta\) and \(\Delta\) a small enough euclidean neighbourhood. This proves the analytic openness of the locus we are interested in, and the proof is complete.

\end{comment}

\end{proof}

\printbibliography
%\bibliography{main_last}
\end{document}